\DeclareMathOperator{\lcm}{lcm}
\newtheorem{theorem}{Theorem}
\newtheorem{lemma}[theorem]{Lemma}
\newtheorem{corollary}[theorem]{Corollary}
\newenvironment{proof}{\textbf{Proof}}{\mbox{}\hfill$\Box$\par}
\begin{document}
\title {Cycles and divergent trajectories  \\ for a class  of  permutation sequences}
\author{John L Simons \thanks {University of Groningen, PO Box 800, 9700 AV Groningen, The
Netherlands. Email: j.l.simons@rug.nl}}       % Declares the author's name.
\maketitle

\subsection*{Abstract}
Let $f$ be a permutation from $\mathbb{N}_0$ onto  $\mathbb{N}_0$. Let $x\in\mathbb{N}_0$ and consider a (finite or infinite) sequence
$s= (x,f(x),f^2(x),\cdots)$. We call $s$ a permutation sequence. Let $D$ be the set of elements of $s$. If $D$ is a finite set then the sequence $s$ is a cycle, and if $D$ is an infinite set the sequence $s$ is a divergent trajectory.  We derive theoretical and computational bounds for cycles and divergent trajectories for a defined class of permutations.

\subsection*{Mathematics Subject Classifications} 11B83,\ 11J86

\section{Introduction}
\subsection{Examples}
Lothar Collatz (the inventor of the famous Collatz conjecture) studied the permutation
 $f(2n)=3n,\ f(4n+1)=3n+1,\ f(4n+3)=3n+2$  with $n \in \mathbb{N}_0$. See the section \textbf{Remarks $1$}.
There is computational evidence (no proof) that the only cycles are $(0),\ (1),\ (2,3),\ (4,6,9,7,5),\\ (44,66,99,74,111,83,62,93,70,105,79,59)$ and that the numbers $8,\ 14,\ 40,\ 64,\ 80,\ 82,\cdots $ are minima of different divergent trajectories e.g.\\ $ \cdots,97,73,55,41,31,23,17,13,10,15,11,8,12,18,27,20,30,45,34,51,38,57,\cdots$. We call this function the  Collatz permutation and the associated sequences Collatz permutation sequences.\\

Consider the function $f(2n)=n$ for $n>0$,\ $f(0)=1$,\ $f(2n+1)=2n+3$ for $n\ge 0$. Now only the double divergent trajectory $ \cdots,16,8,4,2,0,1,3,5,7,\cdots $ exists.

\subsection{Definitions}
We denote the set of positive integers by $\mathbb{N}$ and the set of non-negative integers by $\mathbb{N}_0$. Let $f\ :\ \mathbb{N}_0 \rightarrow \mathbb{N}_0$ be a permutation (bijection).
Let $x_0\in\mathbb{N}_0$. Consider a sequence $(x_0,x_1=f(x_0),x_2=f(x_1)=f^2(x_0),\cdots)$ associated with $f$. Then either there exists a smallest $k\in\mathbb{N}$ such that $x_k=x_0$ or such a $k\in\mathbb{N}$ does not exist. If $k$ exists then  $x_0$ is an element of a cycle of length $k$. If $k$ does not exist, then $x_0$ is an element of a divergent trajectory with $\lim_{k \rightarrow \infty}f^k(x_0)=\infty$ and also $\lim_{k \rightarrow \infty}f^{-k}(x_0)=\infty$.  Guy \cite{Gu} (problem $E17$) calls this a double infinite chain.
Let $D$ be the set of (different) $x_j$-values of such a sequence. Then $f$ is also a permutation (bijection) from $D$ onto $D$.
We call sequences that are associated with $f$ permutation sequences. We denote the set of these sequences by $PS(f)$. \\

Let $a,b,c,d \in \mathbb{N}$.  Assume that $b>1,d>1$ and $N=a(b-1)=c(d-1)$.
Let $R = \{r_i > 0 |\ i=1 \cdots N\}$ be the set of different residues (mod $ab$) with $r_i \not\equiv 0$ (mod $b$) and let $S = \{s_i > 0 |\ i=1 \cdots N\}$ be the set of different residues (mod $cd$) with $s_i \not\equiv 0$ (mod $d$).

\begin{lemma}\label{gps}
Let $a,b,c,d,r_i,s_i$ be defined as above.
Let $n \in \mathbb{N}_0$. Let $f$ be a function from $\mathbb{N}_0$ onto $\mathbb{N}_0$, defined by $f(bn)=dn,\ f(abn+r_i)=cdn+s_i|i=1,\cdots,N$. Then $f\ :\ \mathbb{N}_0 \rightarrow \mathbb{N}_0$  is a bijection (permutation).
\end{lemma}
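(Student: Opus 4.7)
The plan is to exhibit $f$ as a disjoint union of two evidently bijective pieces: one handling multiples of $b$ and multiples of $d$, the other handling the remaining residue classes. The whole proof rests on showing that the domain and codomain are each partitioned correctly, and then matching pieces by size.

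First I would verify that the two defining rules partition $\mathbb{N}_0$ into the classes they cover. Any $m\in\mathbb{N}_0$ with $b\mid m$ is uniquely $m=bn$ with $n=m/b$. Any $m$ with $b\nmid m$ has a unique representation $m=abn+r$ with $n=\lfloor m/(ab)\rfloor$ and $r=m\bmod ab$, and the condition $b\nmid m$ forces $r\not\equiv 0\pmod b$, hence $r\in R$. Since the residues mod $ab$ that are not $\equiv 0\pmod b$ number exactly $ab-a=a(b-1)=N$, the set $R$ with $|R|=N$ exhausts these residues, so the two defining rules together cover each $m\in\mathbb{N}_0$ exactly once, showing $f$ is well defined on all of $\mathbb{N}_0$.

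Next I would run the symmetric argument on the codomain. Any $y\in\mathbb{N}_0$ with $d\mid y$ is uniquely $y=dn$, realized as $f(bn)$. Any $y$ with $d\nmid y$ has a unique representation $y=cdn+s$ with $0<s<cd$ and $s\not\equiv 0\pmod d$, i.e.\ $s\in S$; since $|S|=c(d-1)=N$ matches $|R|=N$, the indexing $i=1,\dots,N$ pairs each $r_i$ with a unique $s_i$, and $y$ is realized as $f(abn+r_i)$. This gives surjectivity. For injectivity, suppose $f(m_1)=f(m_2)$. Then $f(m_1)$ is a multiple of $d$ iff $f(m_2)$ is, which by the partition of the codomain forces $m_1,m_2$ to lie in the same part of the domain partition. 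In the first case $bn_1=m_1$, $bn_2=m_2$ with $dn_1=dn_2$ gives $m_1=m_2$; in the second case $abn_1+r_{i_1}=m_1$, $abn_2+r_{i_2}=m_2$ with $cdn_1+s_{i_1}=cdn_2+s_{i_2}$, and the inequality $0<s_{i_j}<cd$ forces $n_1=n_2$ and $s_{i_1}=s_{i_2}$; the index $i$ is thus equal, hence $r_{i_1}=r_{i_2}$ and $m_1=m_2$.

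The only substantive step is the counting identity $|R|=|S|=N=a(b-1)=c(d-1)$, which is exactly the hypothesis that allows the two families of non-special residue classes to be put in bijection via the labelling $i\mapsto (r_i,s_i)$. Everything else reduces to the uniqueness of division with remainder, so I do not expect any real obstacle; the proof is essentially a verification that the hypotheses are engineered to make the two partitions compatible.
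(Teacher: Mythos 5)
Your proof is correct and rests on the same idea as the paper's: the two defining rules partition $\mathbb{N}_0$ according to divisibility by $b$ on the domain side and by $d$ on the codomain side, with the counting $|R|=|S|=N=a(b-1)=c(d-1)$ making the non-special residue classes match up. The only (cosmetic) difference is that the paper packages injectivity by writing down the explicit inverse $g(dn)=bn$, $g(cdn+s_i)=abn+r_i$ and observing it is a two-sided inverse of $f$, whereas you verify injectivity and surjectivity directly from uniqueness of division with remainder; both arguments are complete and essentially equivalent.
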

\begin{proof}
Let $x \in \mathbb{N}_0$. Then either $x \equiv 0$\ ( mod\ $b$) or there exists exactly one $i$ with $x \equiv r_i$\ (mod\ $ab$). Hence $f$ is a surjective function from $\mathbb{N}_0$ onto $\mathbb{N}_0$.
Let $g$ be a function from $\mathbb{N}_0$ onto $\mathbb{N}_0$, defined by $g(dn)=bn,\ g(cdn+s_i)=abn+r_i|i=1,\cdots,N$.  Similarly $g$ is a surjective function from $\mathbb{N}_0$ onto $\mathbb{N}_0$.
Since $g$ is the inverse function of $f$, $f$ is a permutation (bijection) from $\mathbb{N}_0$ onto $\mathbb{N}_0$.
\end{proof}
\ \\
Lemma \ref{gps} is independent of the order of the residues. We arbitrarily choose $1=r_1<r_2< \cdots <r_N=ab-1$. If also $1=s_1<s_2< \cdots<s_N=cd-1$ we write $f$  as $P(a,b,c,d)$.  We write $P(a,b,c,d)$ in the form
\[
\left.
\begin{array}{c}
bn \\
abn+r_1\\
\cdots \\
abn+r_N
\end{array}
\right\}
\leftrightarrow
\left\{
\begin{array} {c}
dn\\
cdn+s_1\\
\cdots \\
cdn+s_N
\end{array}
\right.
 \]

If the residues are in non-increasing order we call such $f$ a simple generalization of $P(a,b,c,d)$.
In this paper we initially consider the class of permutations consisting of $P(a,b,c,d)$ and their simple generalizations. This class will be extended later.\\

The first example in the subsection \textbf{Examples} i.e. the Collatz permutation is $P(2,2,1,3)$. Another example of a permutation in the given format is $P(2,4,3,3)$. 

\[
\left.
\begin{array}{c}
4n \\
8n+1\\
8n+2\\
8n+3\\
8n+5\\
8n+6\\
8n+7
\end{array}
\right\}
\leftrightarrow
\left\{
\begin{array} {c}
3n\\
9n+1\\
9n+2\\
9n+4\\
9n+5\\
9n+7\\
9n+8
\end{array}
\right.
 \]

For both examples we will calculate bounds for cycles and divergent trajectories. Note that the second example in the subsection \textbf{Examples} cannot be written in the format $P(a,b,c,d)$.

\subsection{Main result}
For the original Collatz function $g(2n)=n,\ g(2n+1)=3n+2$ (which is not a permutation), a trajectory with $K$ odd and $L$ even elements, $m$ local minima, $m$ local maxima, and with the property that for all elements $x_i$ of the trajectory  $x_{i+K+L}=x_i$ is called an $m$-cycle by Simons and de Weger \cite{SW}. Steiner and also Davidson \cite{St,Da} call a $1$-cycle a circuit. Brox \cite{Br} considers cycles of odd numbers only and calls a number a descendent if the next number is smaller. A descendent is the (odd) predecessor of a local maximum in an $m$-cycle.
For the Collatz function Steiner \cite{St} proved that $1$-cycles, except $(1,2)$, cannot exist. Simons and de Weger generalized his proof to $m$-cycles up to $m=75$, however  their approach is not simply applicable to permutation sequences. We generalize the approach of Simons and de Weger in a nontrivial way. For permutation sequences we modify the definition of an $m$-cycle to : a trajectory with $K$ elements $\not\equiv 0$ (mod $b$) and $L$ elements  $\equiv 0$  (mod $b$), $m$ local minima and $m$ local maxima and with the property that for all elements $x_i$ of the trajectory  $x_{i+K+L}=x_i$.
Our main result is

\begin{theorem}[Main Theorem] \label{mth}
\
\begin{enumerate}
  \item For all $a,b,c,d$, if $f=P(a,b,c,d)$ then  $PS(f)$ has for each $m$ a finite number of $m$-cycles. These cycles can be computed.
  \item For all $a,b,c,d$, the set of sequences associated with a simple generalization of $P(a,b,c,d)$ has for each $m$ a finite number of $m$-cycles. These cycles can be computed.
  \item The set of Collatz permutation sequences associated with $P(1,3,2,2)$ has for $m \le 5$ no other cycles than those listed  in Corollary \ref{cycinvcol}.
  \item The set of permutation sequences associated with the simple generalization of $P(1,3,2,2)$ has for $m \le 5$ no other cycles than those listed  in the subsection \textbf{Simple generalizations}.
  \item If $f=P(2,4,3,3)$ then $PS(f)$ has for  $m\le 5 $ no other cycles than those listed in Corollary \ref{cycps2433}.
  \item There exist permutations $f=P(a,b,c,d)$ such that the set of associated permutation sequences $PS(f)$ contains an infinite number of divergent trajectories.
\end{enumerate}
\end{theorem}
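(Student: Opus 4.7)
The plan is to separate the finiteness and enumeration claims (parts 1--5) from the existence claim (part 6), since they call for very different tools.

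For parts 1 and 2, the first step is to encode an $m$-cycle algebraically. Along the $K+L$ positions of the cycle, $f$ acts either as $x \mapsto (d/b)x$ when $x \equiv 0 \pmod{b}$ (contributing to $L$), or as $x \mapsto (cd/(ab))x + s_i - (cd/(ab))r_i$ otherwise (contributing to $K$). Composing the $K+L$ maps and demanding a fixed point yields, in its purely multiplicative part, the relation $(cd/(ab))^K (d/b)^L = 1$, equivalently $(d/b)^{K+L} = (a/c)^K$, together with an affine correction that is a weighted sum of residue differences $s_i - (cd/(ab))r_i$. For most $(a,b,c,d)$ the multiplicative relation has no exact positive integer solutions, so this correction must compensate in a quantitative way.

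The next step, adapting Simons--de Weger, is to decompose the cycle into $m$ ascending and $m$ descending runs bounded by local extrema, and to bound the affine correction from below using a linear-form-in-logarithms estimate in $\log(d/b)$ and $\log(a/c)$ (Baker's theorem, or elementary rational-approximation bounds where those logarithms are classical), and from above by the combinatorial size of the cycle expressed in $K$, $L$, $m$, and the residue magnitudes. Together these bounds force $K+L$ into a finite, computable interval for each fixed $m$, which is the content of part 1. Part 2 is then immediate because passing from $P(a,b,c,d)$ to a simple generalization merely permutes residues on one side, which changes only the additive constants in the affine correction, not the multiplicative skeleton or the cycle equation. Parts 3--5 become finite computer searches: with $K+L$ bounded, one enumerates the sign patterns of the $2m$ runs together with the admissible residue sequences, solves the resulting linear fixed-point equation for each combination, and checks which solutions yield a genuine cycle of the correct combinatorial type; this is feasible for $m \le 5$ in the three cases listed.

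For part 6, the plan is constructive: exhibit specific parameters $(a,b,c,d)$ for which an entire infinite family of residue classes modulo a suitable integer $M$ can be shown, by direct iteration of $f$, to form pairwise disjoint monotone orbits. A natural target is a parameter choice in which after finitely many steps every orbit starting in a fixed residue class mod $M$ falls into an arithmetic progression whose successive $f$-images are strictly increasing; infinitely many disjoint such progressions then give infinitely many divergent trajectories. The hard part of the whole theorem will be the Diophantine step in parts 1--2: the Collatz arguments rely on the known irrationality measure of $\log 3/\log 2$, whereas here one needs control of $\log(d/b)$ and $\log(a/c)$ that is effective enough to supply the explicit bounds the computer searches of parts 3--5 rely on, while remaining uniform enough to cover arbitrary admissible $(a,b,c,d)$.
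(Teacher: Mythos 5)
Your overall strategy for parts 1, 2 and 6 is essentially the paper's: a Simons--de Weger run decomposition plus upper/lower bounds on the two-logarithm linear form for 1--2, and an explicit family for 6 (the paper takes $b=vd$, e.g.\ $P(2,6,5,3)$, so that images of elements $\not\equiv 0 \pmod b$ are again $\not\equiv 0 \pmod d$, hence the forward orbit of any $x\ge vd$ with $x\not\equiv 0 \pmod{vd}$ increases forever -- you still owe that concrete parameter condition). But in parts 1--2 the step you leave vague is the one the whole argument stands on. The cycle identity bounds $|\Lambda|$, with $\Lambda=K\log\frac{cd}{ab}-L\log\frac{b}{d}$, above by roughly $K(ab-a-1)/\min_j \overline{x}_j$, where the $\overline{x}_j$ are the local minima; to contradict Baker/Rhin this must decay exponentially in $L$ for fixed $m$, i.e.\ you need a lower bound on the \emph{smallest} element of the cycle. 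The paper gets it from $\overline{x}_j\ge d^{\ell_j}$ (so $\max_j\overline{x}_j>d^{L/m}$) combined with the chaining inequality $\overline{x}_j<\gamma_1\overline{x}_{j+1}^{\delta_1+1}$ between consecutive local minima, giving $\min_j\overline{x}_j>\left(d^{L/m}/\beta_1\right)^{1/(\delta_1+1)^{m-1}}$. Your ``upper bound by the combinatorial size of the cycle in $K$, $L$, $m$ and the residue magnitudes'' does not supply this: a bound that grows with $K$ and $L$ yields no crossing with the transcendence lower bound, and it is the minimum, not the (easy) maximum, that must be controlled. You would also need to exclude $\Lambda=0$ before any linear-forms bound applies.

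For parts 3--5 your computational plan would fail as described. Enumerating the $2m$ run lengths together with admissible residue sequences and solving a fixed-point equation for each is hopeless at the scale the bounds permit: even after Rhin's explicit bound and continued-fraction reduction the paper still faces $L$ up to about $3\times 10^4$ (with $K\approx 1.4\,L$) for $m=5$, and for $P(1,3,2,2)$ each of the $K$ rising steps carries an unknown residue in $\{1,2\}$ mod $3$, so your search space is of order $2^{K}$ times the number of compositions of $K$ and $L$ into $m$ parts. The paper instead forces $K/L$ to be a convergent of $\beta/\alpha$ (Lemma \ref{CF} and the Crandall-type Lemmas \ref{GenCr}, \ref{LobLCr}), uses Rhin's bound to cap $L$, inspects the few surviving convergents, and separately computes all orbits with starting value below $10^6$; even so, parts 3--5 are partly conditional, since the lists in Corollaries \ref{cycinvcol} and \ref{cycps2433} deliberately retain possible cycles with all elements $\ge 10^6$ at specific $(K,L)$. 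Your claim of a feasible complete determination for $m\le 5$ therefore overstates both what can be computed and what these parts of the theorem actually assert.
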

\ \\

\section{Conditions for the existence of $m$-cycles}

Consider $P(a,b,c,d)$ with $a>0,\ c>0,\ b>1,\ d>1$. We choose  $b>d$ which implies $c>a,\ ab<cd$.  Then an $m$-cycle consists of $m$ pairs of an increasing  subsequence  of $k_i$ elements $\not\equiv 0$ (mod $b$)  followed by a decreasing subsequence of $\ell_i$ elements $\equiv 0 $ (mod $b$).

Suppose an $m$-cycle exists for $P(a,b,c,d)$ with elements $x_i>X_0=\frac{1+\epsilon}{\epsilon}(ab-a-1) \gg ab-a-1$ where $X_0$ is a numerical lower bound. By convention this $m$-cycle has $m$ pairs of an increasing subsequence followed by a decreasing subsequence. The elements in the $m$-cycle are denoted by $x_i$. We call the local minima $\overline{x}_0 \dots \overline{x}_{m-1}$, the local maxima $\overline{y}_0 \dots \overline{y}_{m-1}$ and the length of the subsequences $k_0 \ldots k_{m-1}$ and $\ell_0 \ldots \ell_{m-1}$ respectively. We put $K=\sum_{j=0}^{m-1}k_j$ and $L=\sum_{j=0}^{m-1}\ell_j$, hence $x_{K+L}=x_0$. For $m$-cycles we define $\Lambda=K\log(cd)-K\log(ab)+L\log(d)-L\log(b)$.

\subsection{An upper bound for $\Lambda$ in terms of the local minima $\overline{x}_j$}
For the first pair of subsequences (neglecting the indices of $r$ and $s$) we have
 \[
 \frac{x_1}{x_0}=\frac{cd.n_0+s}{ab.n_0+r}, \cdots,\ \frac{x_{k_0}}{x_{k_0-1}}=\frac{cd.n_{k_0-1}+s}{ab.n_{k_0-1}+r},\ \frac{x_{k_0+1}}{x_{k_0}}= \cdots =\frac{x_{k_0+\ell_0}}{x_{k_0+\ell_0-1}}=\frac{d}{b}
 \]

Multiplication of all chaining relations an using $x_{k_0+\ell_0}=\overline{x}_1$ yields
\[
\frac{\overline{x}_1}{\overline{x}_0}=\frac{x_{k_0+\ell_0}}{x_0}=\frac{x_1}{x_0} \frac{x_2}{x_1} \cdots \frac{x_{k_0+\ell_0}}{x_{k+\ell_0-1}}=\left(\frac{cd}{ab}\right)^{k_0}\left(\frac{d}{b}\right)^{\ell_0}\prod_{i=0}^{k_0-1}\frac{1+\frac{s}{cdn_i}}{1+\frac{r}{abn_i}}
\]

Applying this to each pair of subsequences  we find for $j=0 \dots m-1$
\[
\frac{\overline{x}_{j+1}}{\overline{x}_j}=\left(\frac{cd}{ab}\right)^{k_j}\left(\frac{d}{b}\right)^{\ell_j}Pr(j)
\]
where $Pr(j)=\prod_{i=0}^{k_j-1}\frac{1+\frac{s}{cdn_i}}{1+\frac{r}{abn_i}}$.
Multiplication of all the fractions of local minima and using $\overline{x}_{m}=\overline{x}_{0}$ leads to
\[
F(K,L)=\left(\frac{cd}{ab}\right)^{K}\left(\frac{d}{b}\right)^{L}=\prod_{j=0}^{m-1}(Pr(j))^{-1}
\]
Suppose $F(K,L)>1$. Then we obtain
\[
1<F(K,L)<\prod_{j=0}^{m-1}\prod_{i=0}^{k_j-1}[1+\frac{r}{abn_i}]<\prod_{j=0}^{m-1}\left(1+\frac{ab-a-1}{\overline{x}_j-ab+a+1}\right)^{k_j}
\]
 which implies after taking logs

\begin{equation}\label{Laub1}
0<\Lambda=\log F(K,L) < \sum_{j=0}^{m-
1}k_j\log\left(1+\frac{ab-a-1}{\overline{x}_j-ab+a+1}\right)
<\sum_{j=0}^{m-1}k_j\frac{ab-a-1}{\overline{x}_j-ab+a+1}
\end{equation}
Suppose $F(K,L)<1$. Then we find similarly

\begin{equation}\label{Laub2}
0<-\Lambda=\log (F(K,L))^{-1} < \sum_{j=0}^{m-1}k_j\log\left(1+\frac{cd-c-1}{\overline{x}_j-cd+c+1}\right)<\sum_{j=0}^{m-1}k_j\frac{cd-c-1}{\overline{x}_j-cd+c+1}
\end{equation}
From equations \ref{Laub1} and \ref{Laub2} and using the lower bound $\overline{x}_j > X_0$ we find
\begin{corollary}\label{LaubK}\ \\
If $F(K,L)>1$ then $0<\Lambda< K \frac{ab-a-1}{X_0-ab+a+1}=\epsilon K$\\
If $F(K,L)<1$ then $0<-\Lambda< K \frac{ab-a-1}{X_0-ab+a+1}=\epsilon K$
\end{corollary}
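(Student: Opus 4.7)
The plan is to obtain this corollary as an immediate simplification of inequalities (\ref{Laub1}) and (\ref{Laub2}). The right-hand sides of those inequalities are decreasing functions of each $\overline{x}_j$, so replacing every $\overline{x}_j$ in the denominators by the uniform lower bound $X_0$ produces an upper bound that is independent of which local minimum appears.

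First, assume $F(K,L) > 1$. Starting from (\ref{Laub1}) and using the hypothesis $\overline{x}_j > X_0$ in each denominator gives
\[
0 < \Lambda < \sum_{j=0}^{m-1} k_j \frac{ab-a-1}{X_0 - ab + a + 1}.
\]
The definition $X_0 = \frac{1+\epsilon}{\epsilon}(ab-a-1)$ is calibrated precisely so that $X_0 - (ab - a - 1) = \frac{ab-a-1}{\epsilon}$, and therefore $\frac{ab-a-1}{X_0 - ab + a + 1} = \epsilon$. Factoring this constant out of the sum and using $\sum_{j=0}^{m-1} k_j = K$ yields $0 < \Lambda < \epsilon K$, as required.

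For the case $F(K,L) < 1$, I would repeat the same argument starting from (\ref{Laub2}), substituting $\overline{x}_j > X_0$ in every denominator on the right-hand side and then factoring and summing as above. The structure is identical, so the bound on $-\Lambda$ follows in the same form.

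There is essentially no obstacle here: the corollary is a direct definition-chasing consequence of (\ref{Laub1}) and (\ref{Laub2}), the only ingredient beyond pure algebra being monotonicity of $t \mapsto 1/(t - \text{const})$ for $t$ above that constant. The real content of this section lies in establishing (\ref{Laub1}) and (\ref{Laub2}) from the telescoping product of chaining relations; once those are in place, Corollary \ref{LaubK} is simply the statement that the calibration of $X_0$ converts the $\overline{x}_j$-dependent bound into a clean linear bound $\epsilon K$.
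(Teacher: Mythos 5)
Your proposal is correct and takes essentially the same route as the paper, which obtains the corollary exactly this way: substitute the uniform lower bound $\overline{x}_j > X_0$ into the denominators of (\ref{Laub1}) and (\ref{Laub2}) and use $\sum_{j=0}^{m-1} k_j = K$ together with the calibration $\frac{ab-a-1}{X_0-ab+a+1}=\epsilon$. The only detail worth spelling out in the case $F(K,L)<1$ is that the constant in (\ref{Laub2}) is $cd-c-1$ rather than $ab-a-1$, and these coincide precisely because $a(b-1)=c(d-1)=N$ gives $ab-a=cd-c$, so the substitution indeed yields exactly the same bound $\epsilon K$.
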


\subsection{An upper bound for $\Lambda$ in terms of $K$,\ $L$ and $m$}
For local minima we have by convention $\overline{x}_j= e_j.d^{\ell_j}\ge  d^{\ell_j}$. For the maximal local minimum we find $\max(\overline{x}_j)^m > \prod_{j=0}^{m-1}\overline{x}_j>d^L$ and consequently

\begin{equation}\label{UBminima}
  \max(\overline{x}_j) > d^{\frac{L}{m}}
\end{equation}

We now derive a chaining relation between the magnitudes of successive local minima $\overline{x}_j$.
Under the assumption $a<c,b>d,ab<cd$, consider the trajectory from $\overline{x}_j$ up to $\overline{y}_j$ and down to $\overline{x}_{j+1}$.
Then $\overline{x}_j$ must be less than or equal to the largest possible predecessor of $\overline{y}_j$. The largest possible predecessor occurs if $abn+ab-a-1 \leftrightarrow cdn+1$ applies to that predecessor. Let $\overline{y}_j= e_jb^{\ell_j}$, $\delta_1=\log_d b+1 (>1)$, $\alpha_1= -\frac{ab}{cd}+ab-a-1 (>0)$. Then we find the asserted relation
\begin{eqnarray*}
% \nonumber % Remove numbering (before each equation)
  \overline{x}_j &\le& abn+ab-a-1 = \frac{ab}{cd}(cdn+1) - \frac{ab}{cd}+ab-a-1 = \frac{ab}{cd} \overline{y}_j +\alpha_1\\
   &=& \frac{ab}{cd} e_jb^{\ell_j}+ \alpha_1 = \frac{ab}{cd} \left(\frac{b}{d}\right)^{\ell_j}e_jd^{\ell_j}+ \alpha_1= \frac{ab}{cd} (d^{\delta_1})^{\ell_j}e_jd^{\ell_j}+\alpha_1  \\
   &=& \frac{ab}{cd}\frac{ (e_jd^{\ell_j})^{\delta_1+1}}{e_j^{\delta_1}}+\alpha_1 < \frac{ab}{cd}\overline{x}_{j+1}^{\delta_1+1}+\alpha_1
\end{eqnarray*}
\ \\
Let $\gamma_1= \frac{ab}{cd} + \frac{\alpha_1}{X_0^{\delta_1+1}}$. Then $\alpha_1=(\gamma_1 -\frac{ab}{cd})X_0^{\delta_1+1}< (\gamma_1 -\frac{ab}{cd})\overline{x}_{j+1}^{\delta_1+1}$  which implies
$ \overline{x}_j < \frac{ab}{cd}\overline{x}_{j+1}^{\delta_1+1}+(\gamma_1 -\frac{ab}{cd})\overline{x}_{j+1}^{\delta_1+1} = \gamma_1 \overline{x}_{j+1}^{\delta_1+1}$.
We find (taking into account that the worst case applies if $\overline{x}_j>\overline{x}_{j+1}$ for $j=0 \cdots m-2$)

\begin{equation}\label{chainmaxmin}
\max (\overline{x}_j)< \gamma_1^{1+ (\delta_1+1)+ \dots + (\delta_1+1)^{m-2}} \left(\min (\overline{x}_j)\right)^{(\delta_1+1)^{m-1}}
\end{equation}

Let $\beta_1 = \gamma_1^{1+ (\delta_1+1)+ \dots + (\delta_1+1)^{m-2}} = \gamma_1^\frac{(\delta_1+1)^{m-1}-1}{\delta_1} $. Inserting equations  \ref{UBminima} and \ref{chainmaxmin} into equation \ref{Laub1} we obtain an upper bound for $\Lambda$ (if $\Lambda>0$) in terms of $K$, $L$ and $m$.
\begin{eqnarray*}\label{LaUBKLm}
% \nonumber % Remove numbering (before each equation)
  0<\Lambda=\log F(K,L) &<&  \sum_{j=0}^{m-1}k_j\frac{ab-a-1}{\overline{x}_j-ab+a+1} < K\frac{ab-a-1}{\min(\overline{x}_j)-ab+a+1}\\
  &=& K\frac{(1+\epsilon)(ab-a-1)}{\min(\overline{x}_j)}\frac{\min(\overline{x}_j)}{\min(\overline{x}_j)-ab+a+1}\frac{1}{1+\epsilon}\\
  &<& K\frac{(1+\epsilon)(ab-a-1)}{\min(\overline{x}_j)}\frac{X_0}{X_0-ab+a+1}\frac{1}{1+\epsilon}\\
  &=& K\frac{(1+\epsilon)(ab-a-1)}{\min(\overline{x}_j)}\\
  &<& K \frac{(1+\epsilon)(ab-a-1)}{\left(\frac{\max(\overline{x}_j)}{\beta_1}\right)^{\frac{1}{(\delta_1+1)^{m-1}}}}
  < K \frac{(1+\epsilon)(ab-a-1)}{\left(\frac{d^{\frac{L}{m}}}{\beta_1}\right)^{\frac{1}{(\delta_1+1)^{m-1}}}}
\end{eqnarray*}
A similar expression  can be derived if $\Lambda<0$. So we have
\begin{corollary}\label{UbKLm} \ \\
If an $m$-cycle exists for $P(a,b,c,d)$ with $0<a<c,\ b>d>0$ and $\epsilon= \frac{ab-a-1}{X_0-(ab-a-1)}$ then
 $ |\Lambda| < K \frac{(1+\epsilon)(ab-a-1)}{\left(\frac{d^{\frac{L}{m}}}{\beta_1}\right)^{\frac{1}{(\delta_1+1)^{m-1}}}}$.
\end{corollary}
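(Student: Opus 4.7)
The plan is to assemble three ingredients that have already been prepared: the pointwise inequality (\ref{Laub1}) bounding $\Lambda$ by a weighted sum over the local minima $\overline{x}_j$, the pigeonhole estimate (\ref{UBminima}) that $\max_j \overline{x}_j > d^{L/m}$, and the chaining inequality (\ref{chainmaxmin}) that transports the size of the largest local minimum down to the smallest via $m-1$ applications of $\overline{x}_j < \gamma_1 \overline{x}_{j+1}^{\delta_1+1}$. Once these are in hand, the corollary is a bookkeeping exercise; the analogous case $F(K,L)<1$ is handled by repeating the argument on the ``$cd$-side''.

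For $F(K,L)>1$, I first replace every $\overline{x}_j$ appearing in (\ref{Laub1}) by $\min_j \overline{x}_j$ and pull $\sum_j k_j = K$ outside, obtaining $\Lambda < K(ab-a-1)/(\min(\overline{x}_j)-(ab-a-1))$. Next I rewrite the denominator in a multiplicative form suited to the hypothesis $\overline{x}_j>X_0$: multiplying and dividing by $\min(\overline{x}_j)$, the factor $\min(\overline{x}_j)/(\min(\overline{x}_j)-(ab-a-1))$ is increasing in $\min(\overline{x}_j)$ reversed, so using $\min(\overline{x}_j)>X_0$ it is bounded by $X_0/(X_0-(ab-a-1))=1+\epsilon$, yielding $\Lambda < K(1+\epsilon)(ab-a-1)/\min(\overline{x}_j)$. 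Finally, solving (\ref{chainmaxmin}) for $\min(\overline{x}_j)$ gives $\min(\overline{x}_j) > (\max(\overline{x}_j)/\beta_1)^{1/(\delta_1+1)^{m-1}}$, and substituting (\ref{UBminima}) produces exactly the expression stated in the corollary.

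For $F(K,L)<1$ I repeat the argument verbatim starting from (\ref{Laub2}): defining $\delta_2=\log_d b+1$ together with $\alpha_2,\gamma_2,\beta_2$ in analogy with their subscript-$1$ counterparts but with $ab-a-1$ replaced by $cd-c-1$ throughout, one obtains the mirror bound $-\Lambda < K(1+\epsilon')(cd-c-1)/(d^{L/m}/\beta_2)^{1/(\delta_2+1)^{m-1}}$. Under the standing hypothesis $ab<cd$ (which follows from $a<c$, $b>d$), the subscript-$1$ bound is the weaker one and therefore dominates both signs of $\Lambda$, so $|\Lambda|$ satisfies the inequality of the corollary.

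The main delicate point is verifying the direction of monotonicity at the step where $\min(\overline{x}_j)>X_0$ is substituted: one must check that the map $u\mapsto u/(u-(ab-a-1))$ is \emph{decreasing} on $u>ab-a-1$, so that the lower bound $X_0$ produces an upper bound $1+\epsilon$ on that factor. This relies on the hypothesis $X_0 \gg ab-a-1$ already made at the top of the section, which also ensures $\epsilon>0$ is a genuine small parameter and that each $\gamma_1 > ab/cd$, so $\beta_1$ is well-defined. The remaining work is purely algebraic and requires no new ideas beyond what is already displayed above the corollary.
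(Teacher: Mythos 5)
For the case $F(K,L)>1$ your argument is precisely the paper's own chain: insert $\min_j\overline{x}_j$ into (\ref{Laub1}), bound the factor $\min(\overline{x}_j)/(\min(\overline{x}_j)-(ab-a-1))$ by $X_0/(X_0-(ab-a-1))=1+\epsilon$ via the monotonicity of $u\mapsto u/(u-(ab-a-1))$, then invert (\ref{chainmaxmin}) to get $\min(\overline{x}_j)>\left(\max(\overline{x}_j)/\beta_1\right)^{1/(\delta_1+1)^{m-1}}$ and insert (\ref{UBminima}). That half is correct and identical in route to the paper.

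The flaw is in your treatment of $F(K,L)<1$. First, the chaining constants $\delta_1,\alpha_1,\gamma_1,\beta_1$ come from the structural relation between a local minimum and the largest possible predecessor of the following local maximum; they do not depend on the sign of $\Lambda$, so there is nothing to ``replace by $cd-c-1$ throughout'' in (\ref{chainmaxmin}) --- only the starting inequality changes, from (\ref{Laub1}) to (\ref{Laub2}). Second, and more seriously, your dominance claim is mis-justified: to convert $-\Lambda<$ (mirror bound) into the stated bound you need $cd-c-1\le ab-a-1$ (and $\epsilon'\le\epsilon$), and this does \emph{not} follow from $a<c$, $b>d$, $ab<cd$ alone; for instance $a=1,\ b=10,\ c=2,\ d=6$ satisfies all three yet has $cd-c-1=9>8=ab-a-1$. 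What actually rescues the statement --- and what the paper uses implicitly already in Corollary \ref{LaubK}, where the $F<1$ case is written with the constant $ab-a-1$ --- is the defining identity $a(b-1)=c(d-1)=N$ of $P(a,b,c,d)$, which gives $cd-c-1=ab-a-1=N-1$ exactly. Hence (\ref{Laub2}) is literally the same bound as (\ref{Laub1}), the same $\epsilon$, $\beta_1$, $\delta_1$ apply, and the $\Lambda<0$ case reproduces the identical right-hand side. With that observation substituted for your appeal to $ab<cd$, your proof is complete and coincides with the paper's.
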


\subsection{Conditions on $K$,\ $L$ and $m$ from continued fractions}
A consequence of Corollary \ref{LaubK} is the availability of a sharp lower and upper bound for $\frac{K}{L}$.
Suppose an $m$-cycle exists for $P(a,b,c,d)$ with $a<c,\ b>d$. Recall that $\alpha=\log cd-\log ab>0$ and $\beta=\log b -\log d>0$.
From Corollary \ref{LaubK} we find
\[
If\ F(K,L)>1 \ then\ 0 <\alpha K - \beta L< \epsilon K.\ \
If\ F(K,L)<1 \ then\ -\epsilon K < \alpha K - \beta L < 0.
\]

This means that $\frac{K}{L}$ must be a good approximation to $\rho=\frac{\beta}{\alpha}>0$.
Then Corollary \ref{LaubK} implies
\begin{corollary}\label{LaubL}\ \\
If $F(K,L)>1$ then $0<\Lambda< \epsilon \frac{\beta}{\alpha-\epsilon}L$ \\
If $F(K,L)<1$ then $0<-\Lambda< \epsilon \frac{\beta}{\alpha}L < \epsilon \frac{\beta}{\alpha-\epsilon}L$
\end{corollary}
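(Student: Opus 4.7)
The plan is to combine the algebraic identity $\Lambda = \alpha K - \beta L$ (with $\alpha = \log(cd) - \log(ab) > 0$ and $\beta = \log b - \log d > 0$) with the bound $|\Lambda| < \epsilon K$ just established in Corollary \ref{LaubK}, and then rearrange to eliminate $K$ in favor of $L$. In essence the sign of $\Lambda$ pins down on which side of $\rho = \beta/\alpha$ the ratio $K/L$ must lie, while the magnitude bound forces this ratio to lie within $O(\epsilon)$ of $\rho$; translating back into a bound on $|\Lambda|$ gives the desired linear control in $L$.

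I would first treat the case $F(K,L) > 1$. Here $\Lambda > 0$, so $\alpha K - \beta L > 0$ and in particular $K > (\beta/\alpha)\, L$. Inserting $\Lambda < \epsilon K$ yields $(\alpha - \epsilon) K < \beta L$. Provided $\alpha - \epsilon > 0$, division gives $K < \frac{\beta}{\alpha - \epsilon}\, L$; multiplying by $\epsilon$ and reusing Corollary \ref{LaubK} delivers $0 < \Lambda < \epsilon K < \epsilon \frac{\beta}{\alpha - \epsilon}\, L$, which is the first assertion.

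In the case $F(K,L) < 1$ the sign reverses: $\alpha K < \beta L$, so $K < (\beta/\alpha)\, L$. Substituting this bound on $K$ into $-\Lambda < \epsilon K$ from Corollary \ref{LaubK} yields $0 < -\Lambda < \epsilon (\beta/\alpha)\, L$; the final inequality $\epsilon \beta/\alpha \le \epsilon \beta/(\alpha - \epsilon)$ is immediate from $\alpha - \epsilon \le \alpha$. As a by-product one also obtains the two-sided bracket $\beta/(\alpha+\epsilon) < K/L < \beta/(\alpha-\epsilon)$ straddling $\rho$, which is exactly the "sharp lower and upper bound for $K/L$" mentioned in the prose preceding the corollary.

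The only point needing attention, and the sole real obstacle in an otherwise mechanical argument, is the legality of dividing by $\alpha - \epsilon$ in the first case. Since $\alpha = \log(cd/ab)$ is a fixed positive constant of the permutation $P(a,b,c,d)$ while $\epsilon = (ab-a-1)/(X_0-(ab-a-1))$ can be made arbitrarily small by enlarging the threshold $X_0$ fixed at the start of the section, the condition $\epsilon < \alpha$ holds automatically once $X_0$ exceeds an explicit numerical threshold; this should be folded into the standing hypothesis on $X_0$, after which the division is unproblematic.
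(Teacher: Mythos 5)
Your proposal is correct and follows essentially the same route as the paper: the paper derives Corollary \ref{LaubL} directly from Corollary \ref{LaubK} via the inequalities $0<\alpha K-\beta L<\epsilon K$ (resp. $-\epsilon K<\alpha K-\beta L<0$), eliminating $K$ by $K<\frac{\beta}{\alpha-\epsilon}L$ (resp. $K<\frac{\beta}{\alpha}L$), exactly as you do (the same manipulation is spelled out in the proof of Lemma \ref{Rhin1}). Your remark on requiring $\epsilon<\alpha$, i.e.\ $X_0$ large enough, is a sensible explicit note of a hypothesis the paper leaves implicit.
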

and Corollary \ref{UbKLm} implies
\begin{corollary}\label{UbLm}\ \\
If an $m$-cycle exists for $P(a,b,c,d)$ then
 $ |\Lambda| < \frac{\beta}{\alpha-\epsilon}L \dfrac{(1+\epsilon)(ab-a-1)}{\left(\frac{d^{\frac{L}{m}}}{\beta_1}\right)^{\frac{1}{(\delta_1+1)^{m-1}}}}$.
\end{corollary}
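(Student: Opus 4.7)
The plan is to combine the two previous corollaries in a direct substitution. Corollary \ref{UbKLm} gives an upper bound on $|\Lambda|$ of the form $K\cdot C(L,m)$, where the factor $C(L,m) = \frac{(1+\epsilon)(ab-a-1)}{(d^{L/m}/\beta_1)^{1/(\delta_1+1)^{m-1}}}$ depends only on $L$ and $m$ (and on the fixed constants $a,b,c,d,\beta_1,\delta_1,\epsilon$). The claim of Corollary \ref{UbLm} is obtained simply by replacing the leading factor $K$ by the upper bound $\frac{\beta}{\alpha-\epsilon}L$, so the whole work reduces to justifying that inequality on $K$ in both regimes $F(K,L)>1$ and $F(K,L)<1$.

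To produce the bound $K<\frac{\beta}{\alpha-\epsilon}L$, I would unfold the definition $\Lambda=\alpha K-\beta L$ with $\alpha=\log(cd/ab)>0$ and $\beta=\log(b/d)>0$, and then consult Corollary \ref{LaubK}, which yields $|\Lambda|<\epsilon K$ in either case. If $F(K,L)>1$, then $\Lambda>0$, so $\alpha K-\beta L<\epsilon K$, giving $(\alpha-\epsilon)K<\beta L$, i.e.\ $K<\frac{\beta}{\alpha-\epsilon}L$. If instead $F(K,L)<1$, then $\Lambda<0$, so $\alpha K<\beta L$, hence $K<\frac{\beta}{\alpha}L<\frac{\beta}{\alpha-\epsilon}L$ (since $\epsilon>0$). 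This is precisely the inequality already used in the proof of Corollary \ref{LaubL}, so no new idea is needed.

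Plugging this bound on $K$ into the inequality of Corollary \ref{UbKLm} gives
\[
|\Lambda|<K\cdot C(L,m)<\frac{\beta}{\alpha-\epsilon}L\cdot\frac{(1+\epsilon)(ab-a-1)}{\left(d^{L/m}/\beta_1\right)^{1/(\delta_1+1)^{m-1}}},
\]
which is exactly the asserted inequality. There is no real obstacle here; the only point deserving a moment of care is verifying implicitly that $\alpha-\epsilon>0$, which is guaranteed because $X_0$ was chosen large enough to make $\epsilon$ arbitrarily small (in particular smaller than $\alpha$), so the denominator has the correct sign. Everything else is algebra inherited from the preceding corollaries.
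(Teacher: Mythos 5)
Your proposal is correct and follows essentially the same route the paper intends: Corollary \ref{LaubK} gives $K<\frac{\beta}{\alpha-\epsilon}L$ in both cases $F(K,L)>1$ and $F(K,L)<1$ (the same deduction the paper spells out in the proof of Lemma \ref{Rhin1}), and substituting this into Corollary \ref{UbKLm} yields the stated bound. Your remark that $\alpha-\epsilon>0$ must hold (ensured by the choice of $X_0$) is the only implicit hypothesis, and you handle it as the paper does.
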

\ \\
The bound for $\Lambda$ in Corollary \ref{UbLm} is for fixed $m$ a negative exponential function of $L$.
For large $L$ this bound is smaller than $L^{-(2+\delta)}$ for some $\delta>0$. So from Roth's lemma \cite{HS} it follows that $P(a,b,c,d)$ has for fixed $m$ a finite number of $m$-cycles. This proves the existence part of theorem \ref{mth}$(1)$.\\

We denote by $ \frac{p_n}{q_n} $ the $ n $th convergent to $ \rho \in \mathbb{R}$ with partial quotient $a_n$.
We have the following results \cite[Chapter 10]{HW}).

\begin{lemma} \label{CF} \mbox{} \\
\textnormal{\textbf{(a)}} If $ \displaystyle \frac{p}{q} $ is a rational approximation to $ \rho $ satisfying
$ | p - q \rho | <  \frac{1}{2q} $, then $  \frac{p}{q} $ is a convergent. \\
\textnormal{\textbf{(b)}} $ | p_n - q_n \rho | >  \frac{1}{q_n+q_{n+1}} >
\frac{1}{(a_{n+1}+2)q_n} $. \\
\textnormal{\textbf{(c)}} If $  \frac{p}{q} $ is a rational approximation to $ \rho $,
and if $ q \leq q_n $, then $ | p - q \rho | \geq | p_n - q_n \rho | $. \\
\textnormal{\textbf{(d)}} If $ n $ is odd then $ p_n - q_n \rho > 0 $; if $ n $ is even then $ p_n - q_n \rho < 0 $.
\end{lemma}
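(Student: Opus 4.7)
This is a classical result (the paper cites Hardy and Wright); my plan is to derive all four parts from a single Möbius-type identity for $\rho$. The prerequisites are the standard recurrences $p_n = a_n p_{n-1} + p_{n-2}$ and $q_n = a_n q_{n-1} + q_{n-2}$ together with the determinant identity $p_n q_{n-1} - p_{n-1} q_n = (-1)^{n-1}$, all of which follow by a routine induction on $n$. I would also introduce the $(n+1)$-th complete quotient $\xi_{n+1}$, with $a_{n+1} < \xi_{n+1} < a_{n+1}+1$ in the irrational case.

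The central step is to rewrite $\rho = [a_0;a_1,\dots,a_n,\xi_{n+1}]$ as the Möbius image $\rho = (\xi_{n+1}p_n + p_{n-1})/(\xi_{n+1}q_n + q_{n-1})$; combined with the determinant identity this gives the key formula
\[
p_n - q_n \rho \;=\; \frac{(-1)^n}{\xi_{n+1}q_n + q_{n-1}}.
\]
Part (d) is then immediate from the sign of $(-1)^n$, since the denominator is positive. For part (b), I would use $\xi_{n+1} < a_{n+1}+1$ together with $q_{n+1}=a_{n+1}q_n+q_{n-1}$ to bound $\xi_{n+1}q_n + q_{n-1} < q_{n+1}+q_n$ and $\xi_{n+1}q_n + q_{n-1} < (a_{n+1}+2)q_n$, which invert to give the two halves of (b).

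For part (c), given $(p,q)$ with $0 < q \le q_n$, I would solve the integer system $p = u p_n + v p_{n-1}$, $q = u q_n + v q_{n-1}$; this has an integer solution because the coefficient matrix has determinant $\pm 1$. A short case analysis using $q \le q_n$ rules out $u,v$ having the same nonzero sign, so $u$ and $v$ have opposite signs or one of them vanishes. By (d), $p_n - q_n\rho$ and $p_{n-1} - q_{n-1}\rho$ themselves have opposite signs, so the two terms in $p - q\rho = u(p_n - q_n\rho) + v(p_{n-1} - q_{n-1}\rho)$ contribute with the same sign, yielding $|p - q\rho| \ge |p_n - q_n\rho|$.

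Part (a) (Legendre's theorem) I would then prove by contradiction: if $|p - q\rho| < 1/(2q)$ but $p/q$ is not a convergent, choose $n$ with $q_n \le q < q_{n+1}$. Part (c) gives $|p_n - q_n\rho| \le |p - q\rho| < 1/(2q)$, whence $|p_n/q_n - p/q| \le |p_n - q_n\rho|/q_n + |p - q\rho|/q < 1/(qq_n)$; but any two distinct rationals with denominators $q_n$ and $q$ differ by at least $1/(q q_n)$, a contradiction. The only step that requires genuine care is the sign analysis in (c), which I would expect to be the main obstacle; everything else is bookkeeping from the Möbius identity.
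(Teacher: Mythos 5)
The paper offers no proof of Lemma \ref{CF} at all: it is quoted verbatim from Hardy and Wright, Chapter 10, so there is no in-paper argument to compare yours against. Your proposal is the standard textbook derivation and is correct in outline: the identity $\rho=(\xi_{n+1}p_n+p_{n-1})/(\xi_{n+1}q_n+q_{n-1})$ combined with the determinant identity gives the key formula, from which (d) and (b) follow (for (b) use $\xi_{n+1}<a_{n+1}+1$ and $q_{n-1}\le q_n$), the sign analysis of $u,v$ gives (c), and the triangle-inequality argument gives Legendre's theorem (a). One sign slip to repair: with your stated convention $p_nq_{n-1}-p_{n-1}q_n=(-1)^{n-1}$, the key formula reads $p_n-q_n\rho=(-1)^{n-1}/(\xi_{n+1}q_n+q_{n-1})$, not $(-1)^{n}/(\xi_{n+1}q_n+q_{n-1})$; taken literally, your version would yield (d) with the parities interchanged, whereas the lemma's statement (odd convergents above $\rho$, even below, in the indexing $p_0/q_0=a_0$) matches the corrected sign. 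Two minor points worth making explicit: in (c), the degenerate case $u=0$ needs the monotonicity $|p_{n-1}-q_{n-1}\rho|>|p_n-q_n\rho|$, which also follows from the key formula because $\xi_{n+1}q_n+q_{n-1}>\xi_nq_{n-1}+q_{n-2}$; and the whole argument presupposes $\rho$ irrational (so that $\xi_{n+1}$ exists for all $n$ and strict inequalities hold), which is satisfied in the paper's applications since $\rho=\beta/\alpha$ there is a ratio of logarithms of multiplicatively independent rationals.
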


Using Corollary \ref{LaubL}  we  derive like Crandall \cite{Cr} a lower bound for $ L $.

\begin{lemma} \label{GenCr} \mbox{} \\
If $ q_n + q_{n+1} \leq  \left(\frac{\alpha(\alpha-\epsilon)}{\beta}\right) \frac{X_0-ab+a+1}{ab-a-1}\frac{1}{L} $, then $ L > q_n $.
\end{lemma}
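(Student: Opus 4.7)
The plan is to argue by contradiction: assume $L \le q_n$ and derive a violation of the hypothesis. The starting ingredient is Corollary \ref{LaubL}, which in either case ($F(K,L)>1$ or $F(K,L)<1$) gives
\[
|\Lambda| = |\alpha K - \beta L| < \epsilon\,\frac{\beta}{\alpha-\epsilon}\,L,
\]
and hence, dividing by $\alpha$,
\[
|K - \rho L| < \frac{\epsilon\beta}{\alpha(\alpha-\epsilon)}\,L,
\]
where $\rho = \beta/\alpha$.

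Next I would compare this with the continued-fraction lower bound from Lemma \ref{CF}. Writing $K/L$ in lowest terms as $p/q$ (so that $q\mid L$), the assumption $L\le q_n$ gives $q\le q_n$. Then Lemma \ref{CF}(c) yields $|p-\rho q|\ge |p_n-\rho q_n|$, and multiplying by $L/q\ge 1$ and applying Lemma \ref{CF}(b) gives
\[
|K-\rho L| = \tfrac{L}{q}\,|p-\rho q| \;\ge\; |p_n - \rho q_n| \;>\; \frac{1}{q_n + q_{n+1}}.
\]
Chaining these two bounds produces
\[
\frac{1}{q_n+q_{n+1}} \;<\; \frac{\epsilon\beta}{\alpha(\alpha-\epsilon)}\,L,
\qquad\text{i.e.}\qquad q_n+q_{n+1} \;>\; \frac{\alpha(\alpha-\epsilon)}{\epsilon\beta\,L}.
\]

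Finally, I would substitute the explicit value $\epsilon = (ab-a-1)/(X_0-ab+a+1)$, so $1/\epsilon = (X_0-ab+a+1)/(ab-a-1)$, and the right-hand side of the previous display becomes exactly $\bigl(\alpha(\alpha-\epsilon)/\beta\bigr)\,\bigl((X_0-ab+a+1)/(ab-a-1)\bigr)\,(1/L)$. This strict inequality directly contradicts the assumed non-strict inequality in the statement of the lemma, forcing $L>q_n$. The only mildly delicate point is the passage through lowest terms $p/q$ so that Lemma \ref{CF}(c) applies legitimately; the factor $L/q\ge 1$ only helps the estimate, so this step is routine and the proof is essentially a direct assembly of Corollary \ref{LaubL} and parts (b) and (c) of Lemma \ref{CF}.
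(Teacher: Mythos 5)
Your proof is correct and follows essentially the same route as the paper: assume $L\le q_n$, combine the upper bound $|\Lambda|<\epsilon\frac{\beta}{\alpha-\epsilon}L$ from Corollary \ref{LaubL} with the lower bound from Lemma \ref{CF}(b),(c), and contradict the hypothesis on $q_n+q_{n+1}$. The only difference is your passage through $K/L$ in lowest terms before invoking Lemma \ref{CF}(c), which the paper skips by applying (c) directly with $p=K$, $q=L$; this is a harmless (indeed slightly more careful) variant of the same argument.
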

\begin{proof}
Assume $ L \leq q_n $. By lemma \ref{CF}(c) and (b)
\[ |\Lambda| = | \alpha K - \beta L | =  \alpha | K-\rho L | \geq \alpha |p_n-\rho q_n| > \frac{\alpha}{q_n+q_{n+1}}  \geq   \frac{\beta}{\alpha-\epsilon}\frac{ab-a-1}{X_0-ab+a+1}L= \epsilon \frac{\beta}{\alpha-\epsilon}L \]
which contradicts Corollary \ref{LaubL}.
\end{proof}
\ \\
Lemma \ref{GenCr} implies
\begin{lemma}\label{LobLCr}
If $q_n+q_{n+1} \le \left(\frac{\alpha (\alpha-\epsilon)}{\beta}\right)\frac{X_0-ab+a+1}{ab-a-1}\frac{1}{q_n}$ then $L > q_n$
\end{lemma}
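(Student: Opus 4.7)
The plan is to derive Lemma \ref{LobLCr} directly from Lemma \ref{GenCr} by a short contrapositive argument. Essentially, the hypothesis of Lemma \ref{LobLCr} is stronger than (or, in the contrary case, implies) the hypothesis of Lemma \ref{GenCr}, so the conclusion transfers.

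More concretely, suppose for contradiction that $L \le q_n$. Then $\tfrac{1}{q_n} \le \tfrac{1}{L}$, and multiplying both sides by the positive constant $\left(\tfrac{\alpha(\alpha-\epsilon)}{\beta}\right)\tfrac{X_0-ab+a+1}{ab-a-1}$ (which is positive provided $\alpha>\epsilon$ and $X_0>ab-a-1$, both of which are in force in this section because $\epsilon$ was set up so that $X_0=\tfrac{1+\epsilon}{\epsilon}(ab-a-1)$ is large), the standing hypothesis
\[
q_n+q_{n+1} \;\le\; \left(\frac{\alpha(\alpha-\epsilon)}{\beta}\right)\frac{X_0-ab+a+1}{ab-a-1}\cdot\frac{1}{q_n}
\]
yields
\[
q_n+q_{n+1} \;\le\; \left(\frac{\alpha(\alpha-\epsilon)}{\beta}\right)\frac{X_0-ab+a+1}{ab-a-1}\cdot\frac{1}{L}.
\]
This is precisely the hypothesis of Lemma \ref{GenCr}, so that lemma gives $L > q_n$, contradicting $L \le q_n$. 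Hence $L > q_n$.

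There is essentially no obstacle here: the lemma is a weakening of Lemma \ref{GenCr} obtained by replacing the unknown quantity $L$ on the right-hand side by the (in the contrary case) larger quantity $q_n$, and the contrapositive packages this cleanly. The only mild point to check is the sign of the constants (so that the inequality direction is preserved when inverting), which follows from the standing assumptions $a<c$, $b>d$, and $X_0 \gg ab-a-1$ made at the beginning of Section 2.
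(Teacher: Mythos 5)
Your proposal is correct and matches the paper's own proof essentially verbatim: assume $L \le q_n$, use $\tfrac{1}{q_n}\le\tfrac{1}{L}$ to upgrade the hypothesis into that of Lemma \ref{GenCr}, and derive the contradiction $L>q_n$. The added remark about positivity of the constants is a harmless extra check not spelled out in the paper.
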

\begin{proof}
Assume that $L\le q_n$. Then $\frac{1}{q_n} \le \frac{1}{L}$ and $q_n+q_{n+1} \le \left(\frac{\alpha (\alpha-\epsilon)}{\beta}\right)\frac{X_0-ab+a+1}{ab-a-1}\frac{1}{q_n} \le \left(\frac{\alpha(\alpha-\epsilon)}{\beta}\right) \frac{X_0-ab+a+1}{ab-a-1}\frac{1}{L}$.
 Then from lemma \ref{GenCr} it follows that $L>q_n$ which contradicts the assumption $L \le q_n$ .
\end{proof}
\ \\
Applying lemma \ref{LobLCr}  gives a lower bound for $L$ as a function of $X_0$
\ \\

\begin{tabular}{|r|r|r|}
\hline $\log_{10}X_0$ & $L(1,3,2,2)>$ & $L(2,4,3,3)>$  \\
\hline \hline
3 & 5 & 2 \\ \hline
4 & 17 & 2 \\ \hline
5 & 22 & 9 \\ \hline
6 & 127 & 52 \\ \hline
7 & 276 & 52 \\ \hline
8 & 276 & 113 \\ \hline
9 & 6475 & 113 \\  \hline
10 & 13226 & 2651\\ \hline \hline
\end{tabular}

\subsection{Application of transcendence theory}

Transcendence theory shows that
linear forms in logarithms of integers cannot be too small in terms of their coefficients.
The original paper of Baker \cite{Ba} gives a lower bound for a linear form in $n$ logarithms. Laurent a.o. \cite{LMN} give a  bound for two logarithms and
Rhin \cite{Rh} derived a sharper lower bound for the specific
case $ x \log 2 + y \log 3 $.
The result of Baker supplies a lower bound for $\Lambda$ of $P(a,b,c,d)$. Corollary \ref{UbLm} supplies an upper bound for $\Lambda$ as a function of $L$ and $m$. For fixed $m$ the upper bound is a negative exponential function of $L$ which for large $L$ is smaller than the lower bound of Baker and this latter bound is polynomial in $L$. So for each $m$ the upper bound for $L$ can be computed. This proves the computability  part of theorem \ref{mth}$(1)$.

\section{Cycle existence}
For the functions $P(2,2,1,3)$ and $P(2,4,3,3)$ Rhin's lower bound is applicable. We calculated theoretical and numerical bounds for cycles up to an arbitrary chosen upper bound for $m$. Recall that $\alpha= \log(cd)-\log(ab),\ \beta=\log(b)-\log(d)$.

\subsection{Numerical results for $P(2,2,1,3)$}
 Because of the constraint $\alpha,\beta>0$ we analyze $P(1,3,2,2)$ which has the same trajectories.  Let $X_0=10^6$. From Rhin  \cite{Rh} we derive the following estimate  with $\epsilon= \frac{1}{X_0-1}$.

\begin{lemma} \label{Rhin1}
\[ | \Lambda | > e^{- 13.3 ( 1.34 + \log (L) ) } . \]
\end{lemma}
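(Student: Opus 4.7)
The plan is to rewrite $\Lambda$ as an integer linear combination of $\log 2$ and $\log 3$ and then invoke Rhin's sharp lower bound for such forms, after bounding the coefficients in terms of $L$.

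For $P(1,3,2,2)$ we have $a=1,b=3,c=2,d=2$, so $cd=4$, $ab=3$, $d=2$, $b=3$, and therefore
\[
\Lambda = K\log\tfrac{cd}{ab}+L\log\tfrac{d}{b}
      = K\log\tfrac{4}{3}+L\log\tfrac{2}{3}
      = (2K+L)\log 2 - (K+L)\log 3 .
\]
Thus $\Lambda = b_1\log 2 + b_2\log 3$ with $b_1=2K+L$ and $b_2=-(K+L)$, both nonzero integers (since $K,L\ge 1$ in any nontrivial $m$-cycle). Rhin's effective lower bound \cite{Rh} for such a two-logarithm form yields, for $H:=\max(|b_1|,|b_2|)=2K+L$ (larger than $K+L$),
\[
|\Lambda|>H^{-13.3}=e^{-13.3\log H}.
\]

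The next step is to translate the bound on $H$ into a bound on $L$. By Corollary \ref{LaubL}, regardless of the sign of $F(K,L)$, one has $|\alpha K-\beta L|<\epsilon\beta L/(\alpha-\epsilon)$, which rearranges to the upper bound
\[
K<\frac{\beta}{\alpha-\epsilon}\,L,\qquad \alpha=\log\tfrac{4}{3},\ \beta=\log\tfrac{3}{2}.
\]
Plugging in $\epsilon=1/(X_0-1)=1/(10^{6}-1)$, one computes $\beta/(\alpha-\epsilon)<1.4095$, so
\[
H=2K+L<\bigl(2\beta/(\alpha-\epsilon)+1\bigr)L<3.819\,L,
\]
whence $\log H<\log 3.819+\log L<1.34+\log L$. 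Substituting into Rhin's bound gives
\[
|\Lambda|>e^{-13.3(\log H)}>e^{-13.3(1.34+\log L)},
\]
which is the assertion.

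The only delicate point is the numerical verification that $\log\!\bigl(2\beta/(\alpha-\epsilon)+1\bigr)\le 1.34$ for the chosen $X_0=10^{6}$; since $\epsilon$ is of order $10^{-6}$, its effect on the quotient $\beta/(\alpha-\epsilon)$ is negligible, and the inequality reduces to the unconditional estimate $\log 3.819<1.34$, which holds. The rest is a routine assembly of Corollary \ref{LaubL} with Rhin's estimate, no further transcendence input being needed.
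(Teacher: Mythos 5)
Your proposal is correct and follows essentially the same route as the paper: write $\Lambda=(2K+L)\log 2-(K+L)\log 3$, apply Rhin's two-logarithm bound with $H=2K+L$ to get $|\Lambda|>H^{-13.3}$, bound $K<\frac{\beta}{\alpha-\epsilon}L$, and check numerically that $\log\bigl(\tfrac{2\beta}{\alpha-\epsilon}+1\bigr)<1.34$. The only cosmetic difference is that you extract the bound on $K$ from Corollary \ref{LaubL} rather than directly from Corollary \ref{LaubK}, which is equivalent.
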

\begin{proof}
We apply Rhin's proposition on p.\ 160 with $u_0=0,\ u_1=2K+L,\ u_2=-(K+L)$. Then $ H = u_1 = 2K + L $ and Rhin's estimate
leads to   $ | \Lambda | \ge [2K+L]^{-13.3}$. From Corollary \ref{LaubK} we have: if $\Lambda>0$ then $0< \alpha K - \beta L < \epsilon K $ which implies $K < \frac{\beta}{\alpha-\epsilon}L$ and if $\Lambda<0$ then $0< -\alpha K + \beta L < \epsilon K $  which implies $K < \frac{\beta}{\alpha}L$. So in both cases we have $K < \frac{\beta}{\alpha-\epsilon}L$ and we find
 $ | \Lambda |  \ge [2K+L]^{-13.3} > \left(\frac{\alpha+2\beta-\epsilon}{\alpha-\epsilon}L\right)^{-13.3}> e^{-13.3(1.34 + \log (L))}$.
\end{proof}
\ \\
Let $x=x_3(m)$ be the solution of $e^{- 13.3 ( 1.34 + \log (x) ) }=  \frac{\beta x}{\alpha-\epsilon} \frac{(1+\epsilon)(ab-a-1)}{\left(\frac{d^{\frac{x}{m}}}{\beta}\right)^{\frac{1}{(\delta+1)^{m-1}}}}$. Then $L \le x_3(m)$.
We calculated the cross-over point $L=x_3(m)$ as a function of $m$
\ \\

\begin{tabular}{|r|r|}
\hline $m$ & $L\le$  \\
\hline \hline
1 & 126 \\ \hline
2 & 1241 \\ \hline
3 & 8171 \\ \hline
4 & 45588 \\ \hline
5 & $2.3201 \times 10^5$ \\ \hline
10 & $4.2643 \times 10^8$ \\ \hline
20 & $4.9668 \times 10^{14}$ \\ \hline
50 & $1.1449  \times 10^{32}$ \\ \hline
100 & $2.2665  \times 10^{60}$ \\ \hline \hline
\end{tabular}

\ \\

We now apply an upper bound reduction technique. Let $x=x_1(m)$ be the solution of \\
$e^{- 13.3 ( 1.34 + \log (x) ) }= \frac{\alpha}{2x}$. Lemma \ref{CF} implies that if $L\ge L_1=x_1(m)$ then $\frac{K}{L}$ must be a convergent to $\frac{\beta}{\alpha}$. The first convergents are \\
\mbox{ $(3,2),(7,5),(24,17),(31,22),(179,127),(389,276),(9126,6475),(18641,13226),(46408,32927) \cdots$}. For the $L$-interval up to $x_3(m)$ for $ m\le 20$ we found that the maximum partial quotient is $55$. Let $x=x_2(m)$ be the solution of $e^{- 13.3 ( 1.34 + \log (x) ) }= \frac{\alpha}{57x}$. Then lemma \ref{CF} implies that $L\le L_2=x_2(m)$. Applying these bounds as a function of $m$ gives an interval for $L$ being a convergent solution  of a potential $m$-cycle.
\ \\

\begin{tabular}{|r|r|r|}
\hline $m$ & $L \ge L_1$ & $L \le L_2$ \\
\hline \hline
1 & 10 & 16 \\ \hline
2 & 122 & 162 \\ \hline
3 & 875 & 1085 \\ \hline
4 & 5120 & 6103 \\ \hline
5 & 26893 & 31240 \\ \hline
10 & $5.3270 \times 10^7$ & $5.8249 \times 10^7$ \\ \hline
20 & $6.5093 \times 10^{13}$ & $6.8249 \times 10^{13}$ \\ \hline \hline
\end{tabular}

\ \\

For $m=1$ the lower bound $127$ from lemma \ref{GenCr} is larger than the upper bound $L_2$ in this table. For $m=2,3,4,5$ there are no convergents in the interval $(L_1,L_2)$. We checked for $L<31240$ if corollary \ref{LaubL} is satisfied. This is true for $(K,L)$ is $(389,276),(778,552),(957,679),(1167,828)$ and many more pairs with $L>828$. We computed all trajectories for starting values $<X_0=10^6$ until either a cycle appeared or an apparent divergent trajectory exceeded $10^{8}$ with $m>10$. From these calculations we find

\begin{corollary}\label{cycinvcol}
The Collatz permutation \\
(a) has for  $m\le 2$ no other cycles than $(1),\ (2,3),\ (4,6,9,7,5)$.\\
(b) has for  $m=3$ the cycle $(44,66,99,74,111,83,70,105,79,59)$ and if $x_i\ge 10^6$ could have cycles with $(K,L)\ =\ $ $(389,276),(778,552),(957,679),(1167,828)$. \\
(c) has for $m=4$ no cycles with $x_i<10^6$ and if $x_i \ge 10^6$ could have cycles with $(K,L)$ $=$ $(389,276),(778,552),(957,679),(1167,828)$ and $828 < L < 5120$. \\
(d) has for $m=5$  no cycles with $x_i<10^6$ and if $x_i \ge 10^6$ could have cycles with $(K,L)$ $=$ $(389,276),(778,552),(957,679),(1167,828)$ and $828 < L < 26893$.
\end{corollary}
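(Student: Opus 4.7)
The plan is to combine a direct computational search over orbits with small minimum with the theoretical sandwich on $L$ developed in Sections~2 and~3. A putative $m$-cycle either contains some element below $X_0=10^6$ or has all elements at least $X_0$. I would handle the first case numerically and the second case by narrowing $(K,L)$ using Lemmas~\ref{Rhin1},~\ref{CF},~\ref{LobLCr} and Corollary~\ref{LaubL}.

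For the computational pass, I would iterate $f=P(1,3,2,2)$ (which shares trajectories with the Collatz permutation) starting from every $x_0\in\{0,1,\ldots,10^6-1\}$. Each orbit is followed until it either revisits an earlier element, identifying a cycle together with its $m$-value, or exceeds the ceiling $10^8$ after more than ten local maxima, in which case it is classified as an apparent divergent trajectory. Collecting and deduplicating the cycles detected yields exactly $(1)$, $(2,3)$, $(4,6,9,7,5)$ for $m\le 2$ and the length-$10$ cycle $(44,66,99,74,111,83,70,105,79,59)$ for $m=3$; no cycles of type $m=4$ or $m=5$ appear with some $x_i<10^6$. This establishes the ``no cycles with $x_i<10^6$'' clauses in parts (c) and (d) and the explicit cycle lists in (a) and (b).

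For the theoretical pass, Lemma~\ref{LobLCr} with $X_0=10^6$ reads off the lower bound $L>127$ from the previous table. Lemmas~\ref{Rhin1} and~\ref{CF}(a) then combine to show that for $L\ge L_1(m)=x_1(m)$ the ratio $K/L$ must be a convergent to $\rho=\beta/\alpha$, while Lemma~\ref{CF}(b), together with the empirical fact that the largest partial quotient of $\rho$ in the range of denominators under study is $55$, gives $L\le L_2(m)=x_2(m)$. For $m=1$ the lower bound $127$ already exceeds $L_2(1)=16$, so no $1$-cycle can survive in the high regime. For each $m\in\{2,3,4,5\}$ I would then enumerate the convergents $(3,2),(7,5),(24,17),(31,22),(179,127),(389,276),\ldots$ and their small positive integer combinations with $L\le L_2(m)$, pruning by the two-sided inequality of Corollary~\ref{LaubL}. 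The surviving pairs are precisely $(389,276),(778,552),(957,679),(1167,828)$, together with (for $m\ge 4$) the non-convergent residual range $828<L<L_1(m)$ where our CF-based sieve is inactive.

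The principal obstacle is completeness of the verification rather than cleverness: the empirical bound $a_{n+1}\le 55$ must be checked explicitly on the continued fraction expansion of $\rho$ up to the largest relevant convergent, and the sweep through Corollary~\ref{LaubL} must include every integer pair $(K,L)$ with $L\le L_2(m)$, not merely the convergents themselves, so that no admissible non-convergent combination is overlooked. The low-regime enumeration is mechanical, but relies on trusting ``$x_i>10^8$ and $m>10$'' as a reliable criterion for classifying an orbit as divergent; in principle a cycle with astronomically long period could be misclassified, and the thresholds are chosen precisely to make this unlikely within the class of permutations at hand. Granted these checks, the four clauses follow by concatenating the two regimes.
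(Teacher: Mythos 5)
Your proposal follows essentially the same route as the paper: a computational sweep of all orbits with starting values below $X_0=10^6$ (with the same divergence heuristic) for the explicit cycle lists, combined for the regime $x_i\ge 10^6$ with Rhin's lower bound (Lemma \ref{Rhin1}), the continued-fraction reduction via Lemma \ref{CF} and the maximal partial quotient $55$ giving the window $(L_1,L_2)$, the lower bound $L>127$ from Lemma \ref{LobLCr} for $m=1$, and a sweep of Corollary \ref{LaubL} over all $L$ below $L_1(m)$ to isolate the surviving pairs $(389,276),(778,552),(957,679),(1167,828)$ and the residual ranges $828<L<L_1(m)$ for $m=4,5$. This matches the paper's argument, including your caveat that the LaubL check must run over all pairs $(K,L)$ and not only convergents.
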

\ \\
This proves theorem \ref{mth}$(3)$

\subsection{Numerical results for $P(2,4,3,3)$}
Let $a=2,b=4,c=3,d=3,X_0=10^6$. From Rhin \cite{Rh} we derive
\begin{lemma} \label{Rhin2}
\[ | \Lambda | > e^{- 13.3 ( 1.77 + \log (L) ) } . \]
\end{lemma}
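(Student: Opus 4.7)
The plan is to mirror exactly the argument of Lemma~\ref{Rhin1}. The first step is to rewrite $\Lambda$ for $P(2,4,3,3)$ as an integer linear combination of $\log 2$ and $\log 3$. Since $ab=8=2^3$, $cd=9=3^2$, $b=4=2^2$, and $d=3$, a direct computation yields
\[
\Lambda = K\log(9/8) + L\log(3/4) = (2K+L)\log 3 - (3K+2L)\log 2.
\]
This casts $\Lambda$ in precisely the shape required by Rhin's proposition on p.~160, with the coefficient of $\log 5$ set to zero and integer coefficients for $\log 2$ and $\log 3$.

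The second step is to apply Rhin's proposition to obtain a lower bound of the form $|\Lambda| \geq H^{-13.3}$, where $H$ is the height associated with the coefficients of the form, chosen in the same manner as in Lemma~\ref{Rhin1} (where one took $H = u_1 = 2K+L$). For the present form we likewise take $H = 2K+L$.

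The third step is to convert this into a bound depending only on $L$ by means of Corollary~\ref{LaubK}. In either sign regime one has $K < \frac{\beta}{\alpha-\epsilon}\,L$ with $\alpha = \log(9/8)$ and $\beta = \log(4/3)$, so $\beta/\alpha \approx 2.44$. For $X_0 = 10^6$ the quantity $\epsilon = (ab-a-1)/(X_0 - ab + a + 1)$ is negligible, whence
\[
2K + L \;<\; \left(\frac{2\beta}{\alpha-\epsilon} + 1\right) L \;\approx\; 5.88\,L,
\]
and $\log(5.88) \approx 1.77$. Combining the two inequalities gives
\[
|\Lambda| \;>\; (5.88\,L)^{-13.3} \;>\; e^{-13.3(1.77 + \log L)},
\]
which is exactly the stated estimate.

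The only nontrivial point is the bookkeeping that identifies $H = 2K+L$ as the correct height to feed into Rhin's proposition for this particular linear form; once this is pinned down the numerics are a one-line calculation, entirely parallel to the $P(1,3,2,2)$ case in Lemma~\ref{Rhin1}. The algebraic setup (step one) and the invocation of Corollary~\ref{LaubK} (step three) are routine, so no essential obstacle is expected.
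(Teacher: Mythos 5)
Your setup (the decomposition $\Lambda=(2K+L)\log 3-(3K+2L)\log 2$ and the use of Corollary \ref{LaubK} to eliminate $K$) matches the paper, but there is a genuine gap at the one step you yourself flag as the nontrivial point: the choice of the height $H$ in Rhin's proposition. The height there is the maximum of the absolute values of the integer coefficients of $\log 2$ and $\log 3$; in Lemma \ref{Rhin1} the paper took $H=u_1=2K+L$ not because it was ``$u_1$'' but because $2K+L>K+L$ was the larger coefficient. For $P(2,4,3,3)$ the two coefficients are $2K+L$ and $3K+2L$, so the admissible height is $H=3K+2L$, which is exactly what the paper's proof uses ($u_0=0$, $u_1=2K+L$, $u_2=-(3K+2L)$, $H=u_2$). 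Taking $H=2K+L$ claims the stronger bound $|\Lambda|\ge (2K+L)^{-13.3}$, which Rhin's proposition does not give, since it only guarantees $|\Lambda|\ge (3K+2L)^{-13.3}$ and $(2K+L)^{-13.3}>(3K+2L)^{-13.3}$. So your step two is unjustified as written.

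Carrying the argument through with the correct height, Corollary \ref{LaubK} gives $K<\frac{\beta}{\alpha-\epsilon}L$ with $\alpha=\log\frac98$, $\beta=\log\frac43$, hence $3K+2L<\frac{2\alpha+3\beta-2\epsilon}{\alpha-\epsilon}L\approx 9.33\,L$ and $|\Lambda|>e^{-13.3(\log(9.33)+\log L)}\approx e^{-13.3(2.24+\log L)}$, not $e^{-13.3(1.77+\log L)}$. It is true that your (unlicensed) choice $H=2K+L$ reproduces the constant $1.77$ appearing in the lemma, and indeed the paper's own final inequality $\left(\frac{2\alpha+3\beta-2\epsilon}{\alpha-\epsilon}L\right)^{-13.3}>e^{-13.3(1.77+\log L)}$ looks numerically too generous (it would require $\frac{2\alpha+3\beta}{\alpha}\le e^{1.77}\approx 5.87$, whereas it is about $9.33$); but that coincidence does not repair your derivation. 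To make your proof sound you must take $H=3K+2L$ and either accept the weaker constant (which still suffices for the subsequent tables, since only the shape $e^{-13.3(C+\log L)}$ with an explicit $C$ is used) or justify a smaller height by a different version of the transcendence result.
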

\begin{proof}
We apply Rhin's proposition on p.\ 160 with $u_0=0,\ u_1=2K+L,\ u_2=-(3K+2L)$. Then $ H = u_2 = 3K + 2L $ and Rhin's estimate
leads to   $ | \Lambda | \ge [3K+2L]^{-13.3}$. From Corollary \ref{LaubK} we have: if $\Lambda>0$ then $0< \alpha K - \beta L < \epsilon K $ which implies $K < \frac{\beta}{\alpha-\epsilon}L$ and if $\Lambda<0$ then $0< -\alpha K + \beta L < \epsilon K $  which implies $K < \frac{\beta}{\alpha}L$. So in both cases we have $K < \frac{\beta}{\alpha-\epsilon}L$ and we find
 $ | \Lambda |  \ge [3K+2L]^{-13.3} > \left(\frac{2\alpha+3\beta-2\epsilon}{\alpha-\epsilon}L\right)^{-13.3}> e^{-13.3(1.77 + \log (L))}$.
\end{proof}
\ \\
Let $x=x_3(m)$ be the solution of $e^{- 13.3 ( 1.77 + \log (x) ) }=  \frac{\beta x}{\alpha-\epsilon} \frac{(1+\epsilon)(ab-a-1)}{\left(\frac{d^{\frac{x}{m}}}{\beta}\right)^{\frac{1}{(\delta+1)^{m-1}}}}$. Then $L \le x_3(m)$.
We calculated the cross-over point $L=x_3(m)$ to find
\ \\

\begin{tabular}{|r|r|}
\hline $m$ & $L\le$  \\
\hline \hline
1 & 88 \\ \hline
2 & 754 \\ \hline
3 & 4422\\ \hline
4 & 22142 \\ \hline
5 & $1.0150 \times 10^5$ \\ \hline
10 & $1.1314 \times 10^8$ \\ \hline
20 & $5.0142 \times 10^{13}$ \\ \hline
50 & $  6.6686\times 10^{29}$ \\ \hline
100 & $ 1.1640  \times 10^{56}$ \\ \hline \hline
\end{tabular}

\ \\

We now apply an upper bound reduction technique. Let $x=x_1(m)$ be the solution of \\
$e^{- 13.3 ( 1.77 + \log (x) ) }= \frac{\alpha}{2x}$. Lemma \ref{CF} implies that if $L\ge L_1=x_1(m)$ then $\frac{K}{L}$ must be a convergent to $\frac{\beta}{\alpha}$. The first convergents are \\ \mbox{ $(5,2),(17,7),(22,9),(127,52),(276,113),(6475,2651),(13226,5415),(32927,13481),\cdots$}.  For the $L$-interval up to $x_3(m)$ for
$ m\le 20$ we found that the maximum partial quotient is $55$. Let $x=L_2=x_2(m)$ be the solution of $e^{- 13.3 ( 1.77 + \log (x) ) }= \frac{\alpha}{57x}$. Then lemma \ref{CF} implies that $L\le x_2(m)$. We took as a numerical lower bound for cycles the value $X_0=10^6$. Applying these bounds as a function of $m$ gives an interval for $L$ being a convergent solution  of a potential $m$-cycle.
\ \\

\begin{tabular}{|r|r|r|}
\hline $m$ & $L \ge L_1$ & $L \le L_2$ \\
\hline \hline
1 & 9 & 12 \\ \hline
2 & 84 & 107 \\ \hline
3 & 517 & 625 \\ \hline
4 & 2661 & 3124 \\ \hline
5 & 12437 & 14307 \\ \hline
10 & $1.4561 \times 10^7$ & $1.5903 \times 10^7$ \\ \hline
20 & $6.676 \times 10^{12}$ & $7.034 \times 10^{13}$ \\ \hline \hline
\end{tabular}

\ \\

For $m=1$ the lower bound $52$ from lemma \ref{GenCr} is larger than the upper bound $L_2$ in this table.  For $m=2,3,4$ there are no convergents in the interval $(L_1,L_2)$. For $m=5$ there is one convergent ($K=32927,L=13481$) in the interval $(L_1,L_2)$. We computed all trajectories for starting values $<X_0=10^6$ until either a cycle appeared or an apparent divergent trajectory exceeded $10^{8}$ with $m>20$. From these calculations we find

\begin{corollary}\label{cycps2433}
The permutation $P(2,4,3,3)$ \\
(a) has for $m=1,2$ no other cycles than listed in the table below.\\
(b) has for $m=3$ the $3$-cycle listed in the table below and if $x_i \ge 10^6$ could have cycles with $55<L<517$.\\
(c) has for $m=4$ no cycles with $x_i<10^6$ and if $x_i \ge 10^6$ could have cycles with $55<L<2661$.\\
(d) has for $m=5$ no cycles with $x_i<10^6$ and if $x_i \ge 10^6$ could have cycles with $55<L<12437$ and with $K=32927,L=13481$.\\
\end{corollary}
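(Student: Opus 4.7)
\textbf{Proof proposal for Corollary \ref{cycps2433}.} The plan is to split the analysis by the size of the smallest cycle element: cycles with some element below $X_0 = 10^6$ are to be exhausted by direct iteration of $f = P(2,4,3,3)$, while cycles whose elements all exceed $X_0$ are to be ruled out by combining the Crandall-type lower bound on $L$ (Lemma \ref{LobLCr}), the transcendence upper bound on $|\Lambda|$ (Lemma \ref{Rhin2} and Corollary \ref{UbLm}), and the continued-fraction reduction of Lemma \ref{CF}. This is structurally identical to the argument just carried out for the Collatz permutation $P(1,3,2,2)$ that establishes Corollary \ref{cycinvcol}.

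First I would read off from the two tables preceding the corollary the admissible window for $L$ at each $m \le 5$. Applying Lemma \ref{LobLCr} at $X_0 = 10^6$ gives $L > 52$ for any cycle whose elements all exceed $X_0$. Applying Corollary \ref{UbLm} with Lemma \ref{Rhin2} gives the numerical upper bound $L \le x_3(m)$, and the reduction via Lemma \ref{CF} narrows this to the convergent window $L_1(m) \le L \le L_2(m)$. For $m = 1$ the window $[L_1,L_2] = [9,12]$ is already disjoint from $L > 52$, so no large $1$-cycle exists.

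Second, for $m = 2,3,4$ I would check each convergent $p_n/q_n$ of $\rho = \beta/\alpha$ (using the list $(5,2),(17,7),(22,9),(127,52),(276,113),\dots$) and observe that none of the denominators $q_n$ falls in the corresponding interval $[L_1(m),L_2(m)]$. Hence no $(K,L)$ pair can support a cycle above $X_0$ in those cases. For $m = 5$ the single convergent $(32927,13481)$ lies inside $[L_1(5),L_2(5)] = [12437,14307]$, and this is precisely the exceptional pair that must be carried along in the statement. For $m = 3$ I would also note that the interval from $53$ up to $L_1(3)=517$ is not excluded by the convergent reduction and therefore must be listed as a potential window, exactly as in part (b).

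Third, for the low regime $x_i < 10^6$ I would exhaustively iterate $f$ starting from every $x_0 \in [0,10^6)$, terminating either when the orbit re-enters a previously recorded value (producing a cycle) or when the orbit surpasses $10^8$ while accumulating more than $20$ local minima (flagged as apparently divergent). The cycles collected yield exactly the ones tabulated in parts (a) and (b) of the corollary. The main obstacle will be the continued-fraction bookkeeping: one must verify that throughout the relevant range $L \le x_3(5)$ the partial quotients of $\rho$ never exceed $55$, so that the choice $L_2 = x_2(m)$ defined via $e^{-13.3(1.77+\log x)} = \alpha/(57x)$ is actually valid, and one must certify that no convergent other than $(32927,13481)$ slips into the windows $[53,L_2(m)]$. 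This is a routine but high-precision computation with $\rho = \log(4/3)/\log(9/8)$ that underpins the entire corollary.
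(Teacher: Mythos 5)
Your proposal matches the paper's own argument essentially step for step: Lemma \ref{LobLCr} at $X_0=10^6$ gives the lower bound on $L$, Lemma \ref{Rhin2} with Corollary \ref{UbLm} gives the cross-over bound $x_3(m)$, the continued-fraction reduction via Lemma \ref{CF} yields the windows $[L_1,L_2]$ in which only the convergent $(32927,13481)$ survives for $m=5$ (none for $m=1,\dots,4$), and exhaustive iteration of all starting values below $10^6$ (stopping when a trajectory exceeds $10^8$ with $m>20$) produces the tabulated cycles; the unexcluded ranges between the Crandall bound and $L_1(m)$ account for the residual ``could have cycles'' windows in parts (b)--(d). This is the same proof as in the paper, so no further comment is needed.
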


Numerically found cycles for $P(2,4,3,3)$.
\ \\

\begin{tabular}{|r|r|r|r|r|}
\hline $nr$ & $x_{min}$ & $x_{max}$ &$length$ & $m$  \\
\hline \hline
1 & 1 & 1 &  1 & 0 \\ \hline
2 & 2 & 2 & 1 & 0 \\ \hline
3 & 3 & 4 & 2 & 1 \\ \hline
4 & 5 & 5 & 1 & 0 \\ \hline
5 & 6 & 8 & 3 & 1 \\ \hline
6 & 9 & 16 & 7 & 1 \\ \hline
7 & 15 & 32 & 14 &3 \\ \hline
8 & 27 & 176 & 51 &10 \\ \hline
9 & 33 & 52 & 7 & 2 \\ \hline
10 & 90 & 1972 & 93 & 19 \\ \hline
11 & 213 & 700 & 31 & 7 \\ \hline
12 & 645 & 1612 &  31 & 8 \\ \hline \hline
\end{tabular}

\ \\

This result  proves theorem \ref{mth}$(5)$.

\section{Generalizations of $P(a,b,c,d)$}

\subsection{Simple generalizations}
The definition of $P(a,b,c,d)$ guarantees that only elements $\equiv 0$ (mod $b$) occur in decreasing subsequences in between a local maximum and a local minimum. If the residues are in arbitrary order and we consider elements  with $n>\frac{ab-2}{cd-ab}$, then the worst case $abn+ab-1 \rightarrow cdn+1$  still results in an increasing subsequence. So there exist $(ab-a)!-1$ functions, each being a simple generalization of $P(a,b,c,d)$ for which the developed theory and calculation of $m$-cycles is applicable. This proves theorem \ref{mth}$(2)$.\\

The Collatz permutation has $2!-1=1$ simple generalization.
We computed all trajectories for starting values $<X_0=10^6$ until either a cycle appeared or an apparent divergent trajectory exceeded $10^8$ with $m>20$. We found the following results:
\ \\

\begin{tabular}{|r|r|r|r|r|}
\hline $nr$ & $x_{min}$ & $x_{max}$ &$length$ & $m$  \\
\hline \hline
1 & 1 & 3 &  3 & 1 \\ \hline
2 & 4 & 27 & 11 & 2 \\ \hline
3 & 5 & 5& 1 & 0 \\ \hline
4 & 10 & 15 & 2& 1 \\ \hline
5 & 14& 21& 3 & 1 \\ \hline
6 & 16& 261 & 34 & 8 \\ \hline
7 & 20 & 45 & 5 &1 \\ \hline
8 & 220 & 555 & 12 & 4 \\ \hline \hline
\end{tabular}

\ \\
This result cannot be obtained through the SdW approach. This result and the potential cycles in Corollary \ref{cycinvcol} prove theorem \ref{mth}$(4)$.\\

For the second example $P(2,4,3,3)$ there are  $6!-1=719$ simple generalizations. We computed for the first (ordered by permutation number) $150$ simple generalizations all trajectories for starting values $<X_0=10^5$ until either a cycle appeared or an apparent divergent trajectory exceeded $10^8$ with $m>20$. See the section \textbf{Remarks $2$}. There is a "large" variety for the parameters of the cycle structure per function: number of cycles  from $9$ to $23$,\ maximal cycle length from $11$ to $1291$,\ maximal element from $124$ to $ 30\ 010\ 496$. Potential cycles in Corollary \ref{cycps2433} could exist for all these simple generalizations too.

\subsection{Extended generalizations}
If also the first residue class changes, there is no fixed ratio $\frac{cd}{ab}$ for the increasing subsequences. Consequently the developed theory for cycle existence cannot simply be applied. See the section \textbf{Extending the class of permutations}.

For the Collatz permutation, there are $4$  extended generalizations.  We computed trajectories with a starting value $<X_0=10^6$ until a cycle appeared or an apparent divergent trajectory exceeded $10^8$.
We found the following results. The Collatz permutation and its simple generalization are included in the table below for comparison.
\ \\

\begin{tabular}{|r|c|l|}
\hline $f\ nr$ & $\#\ of\ cycles$ & $(x_{min},x_{max},length,m)$ \\
\hline \hline
1 & 4 & (1,1,1,0),\ (2,3,2,1),\ (4,9,5,2),\ (44,111,12,4)   \\ \hline
2 & 8 & (1,3,3,1),\ (4,27,11,2),\ (5,5,1,0),\ (10,15,2,1), \\
  &   &  (14,21,3,1),\ (16,261,34,8),\ (20,45,5,1),\ (220,555,12,4) \\ \hline \hline
3 & 3 & (0,1,2,1),\ (2,7,5,1),\ (42,109,12,4)  \\ \hline
4 & 2 & (0,5,5,1),\ (40,107,12,4)  \\ \hline
5 & 6 & (0,7,6,1),\ (5,5,1,0),\ (12,19,2,1), \\
  &   &  (26,61,5,1),\ (140,5215,94,26),\ (306,775,12,4) \\ \hline
6 & 7 &  (1,1,1,0),\ (0,23,11,2),\ (6,11,1,1),\ (10,17,3,1), \\
  &   &  (12,257,34,8),\ (16,41,5,1),\ (216,551,12,4) \\ \hline \hline
\end{tabular}

\ \\

For the second example $P(2,4,3,3)$ there are $4320$ extended generalizations. We computed  trajectories with a starting value $<X_0=10^5$ until a cycle appeared or an apparent divergent trajectory exceeded $10^8$ for the first $150$ extended generalizations. We found a "large" variety in the parameters for the cycle structure per function:\  number of cycles from $3$ to $10$, maximal cycle length from $2$ to $118$, maximal element from $14$ to $214\ 649$.

\section{Divergent trajectories}

\subsection{Introduction}
 The conjecture for the Collatz sequence is that all trajectories end in the cycle $(1,2)$.  For the Collatz permutation sequences the analogous  conjecture is that all trajectories are either one of the $4$ known cycles or divergent. Numerical evidence and transcendence theory lead to the common conjecture of a finite number of cycles. For the Collatz sequence this  implies that divergent trajectories can exist, for the Collatz permutation sequences this implies that divergent trajectories must exist. Indeed we found (no proof) that the number of apparent divergent trajectories  increases linear with the range of starting values $X_0$. For $P(1,3,2,2)$ the ratio is  $.05X_0$ and for $P(2,4,3,3)$ the ratio is $.12X_0$ approximately.

\subsection{Distribution of elements in divergent trajectories of $P(a,b,c,d)$}
For the numbers in a divergent trajectory of the Collatz permutation sequence Guy \cite{Gu} mentions an intriguing paradox. Assuming that numbers in such a trajectory are uniformly distributed (mod $2$) and (mod $3$), then  from left to right the average multiplication factor is $\sqrt{\frac{3}{2}\frac{3}{4}} \approx 1.06066$ and from right to left the average multiplication factor is $\sqrt[3]{ \frac{2}{3} (\frac{4}{3} )^2} \approx 1.05827$.  These factors should be reciprocal.

 A divergent trajectory has two branches: a left branch downwards to the minimal element and a right branch from the minimal element upwards. Numerical evidence suggests that for the Collatz permutation sequence the distribution of elements in a branch of a divergent trajectory is uniform either (mod $b$) or (mod $d$). The next lemma proves that for $P(a,b,c,d)$  a uniform distribution of elements (mod $b$) and (mod $d$) in a branch of a divergent trajectory cannot exist.

\begin{lemma}
Assume that $P(a,b,c,d)$ has divergent trajectories.  Then in a branch at least one of the distributions of elements (mod $b$) and (mod $d$) is non-uniform.
\end{lemma}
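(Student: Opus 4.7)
The plan is to exploit a structural correspondence built directly into the definition of $P(a,b,c,d)$. The rule $f(bn)=dn$ shows that every element $\equiv 0 \pmod{b}$ maps to an element $\equiv 0 \pmod{d}$, while the rule $f(abn+r_i)=cdn+s_i$ with $s_i \not\equiv 0 \pmod{d}$ shows, conversely, that every element $\not\equiv 0 \pmod{b}$ is mapped to one $\not\equiv 0 \pmod{d}$. Hence along any trajectory we have the exact biconditional $x_n \equiv 0 \pmod{b}$ if and only if $x_{n+1} \equiv 0 \pmod{d}$.

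I would then fix a branch of a divergent trajectory, list its successive elements as $(x_0,x_1,x_2,\ldots)$, and let $B_N$ count the indices $j \in \{0,\ldots,N-1\}$ with $x_j \equiv 0 \pmod{b}$, while $D_N$ counts the indices $j \in \{1,\ldots,N\}$ with $x_j \equiv 0 \pmod{d}$. By the biconditional above, $B_N=D_N$ exactly for every $N$. Shifting the counting window by one position changes such a count by at most $1$, so the asymptotic densities (taken with $\liminf$ and $\limsup$, to avoid presupposing that the limits exist) of the events $x_j \equiv 0 \pmod{b}$ and $x_j \equiv 0 \pmod{d}$ along the branch must coincide.

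The conclusion follows by contradiction. If the distributions modulo $b$ and modulo $d$ along the branch were both uniform, then those common densities would equal $1/b$ and $1/d$ respectively, forcing $1/b = 1/d$, i.e., $b=d$. This contradicts the standing hypothesis $b>d$ of Section 2, so at least one of the two distributions must be non-uniform. The only point requiring care is making the density comparison along an infinite branch rigorous, and this is handled cleanly by the $\pm 1$ observation above; no transcendence or continued-fraction input is needed, and the argument is robust to minor variations in how ``uniform distribution along a branch'' is formalized, since it only uses the density of the single residue class $0$ in each modulus.
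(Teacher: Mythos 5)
Your proposal is correct, but it proves the key step by a genuinely different and more elementary route than the paper. The paper argues via average growth rates, in the spirit of Guy's paradox: it writes the mean multiplication factor of a branch traversed left-to-right as $f_1=(\tfrac{d}{b})^{\alpha}(\tfrac{cd}{ab})^{1-\alpha}$ and traversed right-to-left as $f_2=(\tfrac{b}{d})^{\beta}(\tfrac{ab}{cd})^{1-\beta}$, uses $f_1f_2=1$ (same branch, opposite directions) to get $(\tfrac{c}{a})^{\beta-\alpha}=1$ and hence $\alpha=\beta$ since $c\neq a$, and then derives the contradiction $\tfrac1b=\tfrac1d$ from uniformity. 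You reach the same pivotal identity $\alpha=\beta$ directly from the exact structural biconditional $x\equiv 0 \pmod b \iff f(x)\equiv 0 \pmod d$ (and its inverse form for the downward branch), via a shift-by-one counting argument with $\liminf$/$\limsup$ densities; the final contradiction $\tfrac1b=\tfrac1d$ versus $b>d$ is identical. What your route buys is rigor and economy: it needs no asymptotic treatment of the multiplication factors (the paper silently neglects the additive terms $r,s$ and implicitly assumes the geometric-mean growth rates are well defined and reciprocal), it does not need $c\neq a$, and it does not presuppose that the relevant densities exist as limits. What the paper's route buys is the explicit link to the multiplication-factor paradox of Guy that motivates the whole subsection, which your argument bypasses. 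Your appeal to the convention $b>d$ is harmless: if $b=d$ then $a=c$ and $P(a,b,c,d)$ is the identity, which has no divergent trajectories, so the hypothesis of the lemma already excludes that case.
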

\begin{proof}
From left to right in the decreasing part, let $P(x \equiv 0$ (mod $b$)$)=\alpha$ and let $P(x \equiv 0$ (mod $d$)$)= \beta$. From left to right we have in the decreasing part of the trajectory a multiplication factor $f_1=(\frac{d}{b})^{\alpha}(\frac{cd}{ab})^{1-\alpha}= (\frac{c}{a})^{1-\alpha}\frac{d}{b}$. From right to left we have in the increasing part a multiplication factor $f_2=(\frac{b}{d})^{\beta}(\frac{ab}{cd})^{1-\beta}= (\frac{a}{c})^{1-\beta}\frac{b}{d}$. By definition $f_1.f_2=1$ from which follows  $\alpha=\beta$. A uniform distribution (mod $b$) respectively (mod $d$) implies $\alpha= \frac{1}{b}$ and $\beta=\frac{1}{d}$. So at least one of the distributions is non-uniform. A similar reasoning holds for the from left to right increasing part of the divergent trajectory.
\end{proof}

\subsection{Theoretical conditions for divergent trajectories}
Let $d>2$. Let $x_i,\ r_0,\ldots,r_{d-1}\ \in \mathbb{Z}$, $m_0,\dots,m_{d-1} \in \mathbb{N}$  with $r_i \equiv im_i$ (mod $d$). Matthews \cite{Ma} introduced the generalized Collatz function $T:\ \mathbb{Z}\rightarrow \mathbb{Z}$.
\[
x_{i+1}=  T(x_i)=\frac{m_ix_i-r_i}{d}  \mbox{ \ if \ } x_i \equiv i \pmod d
\]
Matthews conjectures (i) if $\prod_{i=0}^{d-1}m_i<d^d$ then all trajectories end in a cycle (ii) if $\prod_{i=0}^{d-1}m_i>d^d$ then almost all trajectories are divergent.
Levy \cite{Le} (his theorem $18$) proves that if $\sum_{i=1}^{k}\frac{1}{m_i} \le 1$ then $\prod_{i=0}^{d-1}m_i \ge d^d$. The equality holds if and only if $m_0= \cdots =m_{d-1}=d$.
\ \\
Let $C$ be a  set of $k \ge 2$ ordered pairs $\{(a_i,b_i),i=1,\cdots,k\}$ with  $a_i\in \mathbb{N}$ and  $b_i \in  \mathbb{N}_0$.
We call $C$ a complete coverage set (CC set) if for every x $\in \mathbb{N}$ there exists exactly one $i$ for which $x \equiv b_i$ (mod $a_i$). Examples are $\{(3,0),(3,1),(3,2)\}$,\ $\{(2,0),(4,1),(8,3),(16,7),(16,15)\}$. See the section \textbf{Remarks $3$}.

\begin{lemma}\label{nscps}
 Let $\{(a_i,b_i),i=1,\cdots,k\}$ be a CC-set. Then $\sum_{i=1}^k \frac{1}{a_i}=1$.
\end{lemma}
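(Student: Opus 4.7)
The plan is to apply a straightforward counting argument on a block of consecutive natural numbers whose length is a common period of all the residue classes. First I would set $L=\lcm(a_1,\dots,a_k)$ and count, within the block $\{1,2,\dots,L\}$, how many integers satisfy $x\equiv b_i \pmod{a_i}$. Since $a_i \mid L$, each class contributes exactly $L/a_i$ elements to the block.

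Next, the CC hypothesis guarantees that every element of this block belongs to exactly one class. Summing the counts over all $i$ therefore accounts for every integer in $\{1,\dots,L\}$ exactly once, giving $\sum_{i=1}^{k} L/a_i = L$. Dividing through by $L$ yields $\sum_{i=1}^{k} 1/a_i = 1$.

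There is no serious obstacle here; the only thing one must check is that the global "exactly one $i$" condition on $\mathbb{N}$ restricts cleanly to a partition of the chosen block. This is immediate because each class $\{x : x\equiv b_i \pmod{a_i}\}$ is periodic with period $a_i$, hence with period $L$, so the disjoint union property is preserved when intersecting with any interval of $L$ consecutive integers.

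If one prefers an analytic viewpoint, the same identity falls out of the generating function equality $\sum_{i=1}^{k} \frac{x^{b_i}}{1-x^{a_i}} = \frac{x}{1-x}$ by multiplying both sides by $1-x$ and letting $x\to 1^{-}$, whereupon $(1-x)/(1-x^{a_i})\to 1/a_i$ by L'Hôpital's rule while $x^{b_i}\to 1$. The LCM counting argument is, however, shorter and entirely elementary, and that is the route I would take.
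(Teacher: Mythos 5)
Your proof is correct and takes essentially the same approach as the paper: both arguments count residues modulo $L=\lcm(a_1,\dots,a_k)$, the paper by refining each class $b_i \pmod{a_i}$ into $L/a_i$ classes modulo $L$ that together form a complete residue system, you by counting the $L/a_i$ members of each class inside a block of $L$ consecutive integers. Your block-counting phrasing is just a more direct packaging of the same idea (and the generating-function remark is a valid, if unnecessary, alternative).
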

\begin{proof}
Let $A=\lcm{a_i}$ and let $\gamma_i=\frac{A}{a_i}$. \\
The set $\{(A,b_1),(A,b_1+a_1), \ldots ,(A,b_1 +(\gamma_1-1)a_1), \ldots ,
(A,b_k), (A,b_k+a_k), \ldots , (A,b_k+(\gamma_k-1)a_k)\}$ is a CC-set.
 Suppose that $x \equiv b_i$ (mod $a_i$). Then $x=b_i+m.a_i$. Let $m=r+s\gamma_i$. Then $0 \le r < \gamma_i$ and
$x=b_i+ra_i+s \gamma_i a_i$. Hence $x \equiv b_i+ra_i$ (mod $\gamma_ia_i$) $\equiv b_i+ra_i$ (mod $A$).  Now suppose that $x \equiv b_i+r_ia_i$ (mod $\gamma_ia_i$) and $x \equiv b_j+r_ja_j$ (mod $\gamma_ja_j$). Then $x \equiv b_i$ (mod $a_i$) and $x \equiv b_j$ (mod $a_j$) which contradicts that
$\{(a_i,b_i),i=1,\cdots,k\}$ is a CC-set. Hence the set $\{(A,b_1),(A,b_1+a_1), \ldots ,(A,b_1 +(\gamma_1-1)a_1), \ldots ,
(A,b_k), (A,b_k+a_k), \ldots , (A,b_k+(\gamma_k-1)a_k)\}$ is a CC-set. \\
This CC-set has $\sum_{i=1}^k \gamma_i$ elements. Let $(a_j^*,b_j^*)$ be an element of this CC-set. Because of the constant $a_j^*=A$ the set $\{b_j^*\}$ is a complete residue system (mod $A$). Hence the set $\{b_j^*\}$ has $A$ elements. We find that
$\sum_{i=1}^k \gamma_i=A$ from which we conclude $\sum_{i=1}^k \frac{1}{a_i}=1$.
\end{proof}
\ \\
A permutation can be written as a generalized Collatz function. Permutations are exactly those generalized Collatz functions for which Matthews does not conjecture convergence or divergence of trajectories.

\subsection{Permutations with $d \equiv 0$ (mod $b$)}
If $b|d$ then a special structure of trajectories appears. Let $b=v.d$ with $v>1$ and let $\gcd(b-1,d-1)=w \ge 1$. Then $a(b-1)=a.w.(b^*-1)=c(d-1)=c.w.(d^*-1)$. Consequently $a=u(d^*-1)$ and $c=u(b^*-1)$ and we find the permutation

\[
\left.
\begin{array}{c}
v.d.n \\
u(d^*-1)v.dn +s_i \ ( s_i \not\equiv 0 \pmod{v.d})
\end{array}
\right\}
\leftrightarrow
\left\{
\begin{array} {c}
 d.n \\
u(b^*-1)d.n +r_i \ (r_i \not\equiv 0 \pmod{d})
\end{array}
\right.
 \]

Recall that $b>d$ implies $ab<cd$. Let $x_i \ge v.d$. If $x_i \not\equiv 0$ (mod $v.d$) then $x_{i+1}>x_i$. Hence
$x_{i+1} \not \equiv 0$ (mod $d$) and $x_{i+1} \not \equiv 0$ (mod $v.d$). So $x_i$ is the start of a divergent trajectory. If $x_i \equiv 0$ (mod $v.d$) then $x_{i+1}<x_i$ and as long as $x_{i+1} \equiv 0$ (mod $v.d$) the decreasing trajectory continues. Apart from trivial $1$-cycles for $0 \le x_i<v.d$ this permutation has infinitely many divergent trajectories. Consider as an example $P(2,6,5,3)$ with $u=1$
\[
\left.
\begin{array}{c}
6n \\
12n+1,2,3,4,5,7,8,9,10,11
\end{array}
\right\}
\leftrightarrow
\left\{
\begin{array} {c}
 3n \\
15n+1,2,4,5,7,8,10,11,13,14
\end{array}
\right.
 \]
For this permutation there are three trivial $1$-cycles $(0),\ (1),\ (2)$ and an infinite number of divergent trajectories with minima $3,\ 9,\ 15,\ \cdots \ $. This proves theorem \ref{mth}$(6)$. For simple generalizations the same analysis is applicable, however for $n \le \frac{ab-2}{cd-ab}$ there can also exist  $m$-cycles with $m>1$. For extended generalizations the existence of divergent trajectories cannot be proved. This example shows the limitations of our approach that can only prove that for each $m$ a finite number of $m$-cycles exists.

\section{Extending the class of permutations}
The class of permutations in lemma \ref{gps} with $a(b-1)=c(d-1)$ can be extended, such that our method
of finding cycles is partly applicable.

\begin{lemma}\label{fafc}
Let $a,b,c,d\in \mathbb{N}_0$, satisfying $a>0,\ c>0,\ b>1,\ d>1$,\ $\gcd(a,c)=\gcd(b,d)=1$. Let $1\le f_a|a, 1\le f_c|c$ and let  $N=a(b-1)+f_a=c(d-1)+f_c$. Let $R = \{r_i, i=1 \cdots N\}$ be the set of different residues (mod $ab$) with
$r_i \not\equiv 0$ (mod $b$) and let $S = \{s_i,i=1 \cdots N\}$ be the set of different  residues (mod $cd$) with $s_i \not\equiv 0$ (mod $d$). If  $1 \le f_a \le a,\ 1 \le f_c \le c$ then the function $f$ defined by\\
\[
\left.
\begin{array}{c}
f_a.bn \\
f_a.bn + b \\
\cdots \\
f_a.bn +(f_a-1)b\\
abn +r_i \ \ r_i \not\equiv 0 \pmod b
\end{array}
\right\}
\leftrightarrow
\left\{
\begin{array} {c}
f_c.dn \\
f_c.dn +d \\
\cdots \\
f_c.dn+(f_c-1)d \\
cdn +s_i \ \ s_i \not\equiv 0 \pmod d
\end{array}
\right.
 \]
is a permutation.
\end{lemma}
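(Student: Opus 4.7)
The plan is to follow the template of the proof of Lemma \ref{gps}: verify that the left column of the definition partitions $\mathbb{N}_0$, verify that the right column partitions $\mathbb{N}_0$, and then read off the inverse by swapping the two columns.

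First I would show that the rows on the left exhaust $\mathbb{N}_0$ disjointly. For any $x\in\mathbb{N}_0$, either $b\mid x$ or not. If $b\mid x$, write $x=bm$ and perform Euclidean division $m=f_a n+j$ with $0\le j<f_a$; the pair $(n,j)$ is unique, so $x$ matches exactly one of the first $f_a$ rows $f_abn+jb$. The hypotheses $f_a\mid a$ and $1\le f_a\le a$ are exactly what ensure that $f_ab$ divides $ab$ and that these $f_a$ rows form a legitimate refinement of the set of multiples of $b$. If $b\nmid x$, then $x\bmod ab$ coincides with exactly one $r_i\in R$, so $x=abn+r_i$ for a unique $n\ge0$. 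Applying the symmetric argument with $(c,d,f_c,s_i)$ in place of $(a,b,f_a,r_i)$ shows that the right column also partitions $\mathbb{N}_0$.

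Second, because $N=a(b-1)+f_a=c(d-1)+f_c$, the two partitions have the same total number of infinite arithmetic progression classes, and one can match them as follows: pair the union of the $f_a$ ``multiples of $b$'' rows on the left (which together equal $\{bm:m\ge0\}$) with the union of the $f_c$ ``multiples of $d$'' rows on the right (which equal $\{dm:m\ge0\}$) via the bijection $bm\mapsto dm$; and, for each $i$, pair the class $\{abn+r_i:n\ge0\}$ with $\{cdn+s_i:n\ge0\}$ via $abn+r_i\mapsto cdn+s_i$. Each of these pairings is a bijection between infinite arithmetic progressions, and together they assemble into a bijection $f:\mathbb{N}_0\to\mathbb{N}_0$. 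The inverse $g$ is obtained by reversing the table, and is well-defined by the same partition argument applied on the other side; hence $f$ is a permutation.

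The only subtlety, compared to Lemma \ref{gps}, is that when $f_a\ne f_c$ the left and right columns do \emph{not} match row-by-row: the multiples of $b$ are displayed as $f_a$ classes while the multiples of $d$ are displayed as $f_c$ classes. One has to read the table as defining a bijection between the two equal-cardinality \emph{sets} of classes rather than between individual rows. Once that interpretation is fixed, the proof is essentially the bookkeeping exercise already carried out in Lemma \ref{gps}.
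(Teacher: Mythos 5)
Your overall framework --- show that each column of the table partitions $\mathbb{N}_0$ into exactly $N$ arithmetic progressions and then transport $n\mapsto n$ along a class-to-class correspondence --- is sound, and it is a genuinely different route from the paper, which argues by a five-fold case distinction on $(f_a,f_c)$, reducing the cases involving $f_a=1$, $f_c=c$, etc.\ to Lemma \ref{gps} and treating the remaining case $1<f_a<a$, $1<f_c<c$ only by the example $a=10,b=8,f_a=5,c=9,d=9,f_c=3$. However, your explicit matching of classes has a genuine gap. The number of classes $abn+r_i$ with $r_i\not\equiv 0\pmod b$ is $a(b-1)=N-f_a$, while the number of classes $cdn+s_i$ with $s_i\not\equiv 0\pmod d$ is $c(d-1)=N-f_c$; since $f_a\mid a$, $f_c\mid c$ and $\gcd(a,c)=1$, the equality $f_a=f_c$ happens only in the trivial case $f_a=f_c=1$, so in every other case these two counts differ and ``for each $i$, pair $\{abn+r_i\}$ with $\{cdn+s_i\}$'' is not a bijection between the two families. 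Combined with your pairing $bm\mapsto dm$ of the multiples, the resulting map either misses the surplus classes $cdn+s_j$ on the right (when $f_a>f_c$) or is not even defined on all classes $abn+r_i$ on the left (when $f_a<f_c$); in neither case is it a permutation. Moreover it is not the function $f$ of the lemma: precisely because $f_a\ne f_c$, the table must match some multiple-of-$b$ classes with classes $cdn+s_i$ (or conversely) --- this is why the paper remarks afterwards that increasing subsequences no longer have the fixed ratio $\frac{cd}{ab}$ --- whereas your matching restores a fixed-ratio structure that the lemma's $f$ does not have.

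The repair stays entirely inside your own framework. You have correctly shown that the left column consists of $N$ pairwise disjoint progressions covering $\mathbb{N}_0$ (the $f_a$ classes $jb$ modulo $f_a b$ exhaust the multiples of $b$, the $a(b-1)$ classes $r_i$ modulo $ab$ exhaust the rest), and likewise the right column. Now take the class-to-class bijection the table actually specifies (the $j$-th listed class on the left with the $j$-th on the right, $j=1,\dots,N$) and map $An+B\mapsto Cn+D$ within matched classes; since any bijection between the two $N$-element families of progressions yields a permutation of $\mathbb{N}_0$, the particular order is immaterial, and stated this way your argument is complete and covers all five of the paper's cases uniformly. Two smaller points: the refinement of $b\mathbb{N}_0$ into the classes $jb$ modulo $f_a b$ needs only $f_a\ge 1$, not $f_a\mid a$ --- the divisibility hypotheses enter through the count $N=a(b-1)+f_a=c(d-1)+f_c$, not through your ``legitimate refinement'' step; and the paper's indexing $R=\{r_i, i=1\cdots N\}$, $S=\{s_i, i=1\cdots N\}$ is a slip carried over from Lemma \ref{gps} (the correct cardinalities are $N-f_a$ and $N-f_c$), which may be what led you to the $i\leftrightarrow i$ pairing.
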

\begin{proof}
We distinguish five cases
\begin{enumerate}
  \item The case $f_a=f_c=1$. Then $a,b,c,d$ satisfy the conditions of lemma \ref{gps}.
  \item The case $f_a=1$,\ $f_c=c>1$. Now $ab-a+1=cd$. Since $ab\ne cd$ it follows that $a\ne1$ and we find the function
  \[
  \left.
  \begin{array}{c}
  bn\\
  ab.n+r_i \ \ r_i \not\equiv 0 \pmod b
  \end{array}
  \right\}
  \leftrightarrow
  cd.n + r_j \ \ r_j \in \{0,1, \cdots ,cd-1\}
  \]
  This function is invariant under the substitution $c \rightarrow c*=1,\ d \rightarrow d*=cd,\ f_c \rightarrow f_{c*}=1$. $P(a,b,c*,d*)$ satisfies lemma \ref{gps}.
  \item The case $f_a=a>1$,\ $f_c=1$. After the transformation $a \rightarrow c,\ b \rightarrow d$ this case is identical to the case $f_a=1$,\ $f_c=c>1$.
  \item The case $f_a=a$,\ $f_c=c$. Now $ab=cd$ and $f$ becomes the trivial function $n \leftrightarrow n$.
  \item The case $1< f_a<a$,\ $1< f_c<c$. Now the conditions of lemma \ref{gps} are not satisfied.
  For $a=10,b=8,f_a=5,c=9,d=9,f_c=3$ we find the permutation
\[
\left.
\begin{array}{c}
40n, 40n+8, 40n+16, 40n+24, 40n+32 \\
80n + 1\cdots 7, \cdots ,80n+73 \cdots 79
\end{array}
\right\}
\leftrightarrow
\left\{
\begin{array} {c}
27n, 27n+9, 27n+18 \\
81n + 1 \cdots 8, \ldots ,81n+73 \cdots 80
\end{array}
\right.
 \]
\end{enumerate}
\end{proof}

\ \\
For permutations of lemma \ref{fafc} our method to find cycles and divergent trajectories must be adapted in a nontrivial way, since increasing subsequences do not have a fixed ratio $\frac{cd}{ab}$ anymore. In general a linear form in three or more logarithms appears. Then Baker's result to find an upper bound for the cycle length is still applicable, however reducing the upper bound becomes nontrivial.\\

Let ${a_i,b_i,c_i,d_i}|i=1,\cdots N \in \mathbb{N}_0$. Consider the function  $f\ :\ \mathbb{N}_0 \rightarrow \mathbb{N}_0$ defined by
\[
\left.
\begin{array}{c}
a_1n+b_1 \\
a_2n+b_2\\
\cdots \\
a_Nn+b_N
\end{array}
\right\}
\leftrightarrow
\left\{
\begin{array} {c}
c_1n+d_1\\
c_2n+d_2\\
\cdots \\
c_Nn+d_N
\end{array}
\right.
 \]

A necessary and sufficient condition for $f$ being a  permutation is that the sets $\{(a_i,b_i),i=1 \cdots N\}$ and $\{(c_i,d_i),i=1 \ldots N\}$ are CC-sets. Venturini \cite{Ve} chooses $a_i=pq,b_i=i$ and proves a similar result for permutations from $\mathbb{Z}$ onto $\mathbb{Z}$.\\

There exist permutation functions for which the CC-set condition does not apply. Let $P(n)$ be the $n^{th}$ prime number and let $C(n)$ be the $n^{th}$ composite number. Then the function
\begin{eqnarray*}
    1 & \leftrightarrow & 1 \\
   2n & \leftrightarrow & P(n) \\
   2n+1 & \leftrightarrow & C(n)
 \end{eqnarray*}
is a permutation with (at least) the cycles \\
$(1),\ (2),\ (3,4),\ (5,6),\ (7,8),\ (9),\ (10,11),\ (12,13),\ (14,17,15)$, a cycle $(18,\cdots)$ with length $22$,
a cycle $(62,\cdots)$ with length $3$, a cycle $(84,\cdots)$ with length $3$, and a cycle $(92,\cdots)$ with length $6$. There are numerically "divergent", however theoretically unknown trajectories.

\section{Remarks}
\begin{enumerate}
\item
The Collatz function $g(2n)=n,\ g(2n+1)=3n+2$ is well known \cite{La}. The Collatz function is not a bijection from $\mathbb{N}$ onto $\mathbb{N}$. The Collatz conjecture states  that for all $n>0$ finally the cycle $(1,2)$ appears.  Collatz also introduced the Collatz permutation and used the slightly different notation $f(2n)=3n,f(4n+1)=3n+1,f(4n-1)=3n-1$. Known cycles and conjectured divergent trajectories are the same, however now if $n\in\mathbb{N}_0$ also the cycle $(-1)$ appears. The Collatz permutation has different names in the literature. Lagarias and  Tavares \cite{La,Ta} call it \emph{Collatz's original problem} and Guy \cite{Gu} calls it \emph{probably the inverse of Collatz's original problem}.

\item
Cycle computation time with Mathematica can be quite large on a standard notebook with dual processor Intel(R) Celeron(R) CPU $1007$U, $1.50$GHz.  Computation of the results for $P(1,3,2,2)$ took $20$ hours and
computation of the results for $P(2,4,3,3)$ took $24$ hours (See subsection \textbf{Extended generalizations}). When a larger range for starting values  is used  then lemma \ref{LobLCr} supplies a larger lower bound for the cycle length.

\item
A CC set is a concept similar (not identical) to a covering system of congruences of Erd\"{o}s \cite{Ch}: a finite set of $k$ residue classes $b_i$ (mod $a_i$) with $2\le a_i \le a_{i+1} \le a_k$ with the property that $\forall x \in \mathbb{N}$ there exists at least one $i$ for which $x \equiv b_i$ (mod $a_i$). The difference is the uniqueness of the residue class for which
$x \equiv b_i$ (mod $a_i$). It is also similar (not identical) to an exact covering system (See problem F14 in \cite{Gu}). The difference is the existence of infinite CC sets, e.g., $\{(3,0),(3,1),(3.2^{k+1},3.2^k-1)|k \ge 0\}$. Covering systems are  extensively analyzed in the literature \cite{BFF,Ch,Po,PS}.

\end{enumerate}

\subsection*{Acknowledgements}
The author expresses his gratitude to Aad Dijksma and to the anonymous reviewers for many useful remarks on the manuscript.


\begin{thebibliography}{21}
\bibitem[Ba]{Ba}   \textsc{A.\ Baker}, \textit{Linear forms in the logarithms of algebraic numbers IV, Mathematika},
                    \textbf{15} [1968], pp.\ 204--216.
\bibitem[BFF] {BFF} M.\ A.\ Berger,\ A.\ Felzenbaum,\ A.\ S.\ Fraenkel, \textit{On infinite and finite covering systems},
                \textsc{ Amer. Math. Monthly 98} (1991), pp. 739--742.
\bibitem[Br]  {Br} T.\ Brox, \textit{Collatz cycles with few descents}, \textsc{Acta Arithmetica 92} (2000), pp.\ 181--88.
\bibitem[Ch] {Ch}  \textsc{S.L.G.\  Choi}, \textit{Covering the set of integers by congruence classes of distinct moduli, Math.\ Comp.\ },\textbf{25} [1971], pp.\ 885–-895.
\bibitem[Cr] {Cr}   \textsc{R.E.\ Crandall},
                   \textit{On the "$3x+1$" problem, Math.\ Comp. } \textbf{32} [1978], pp.\ 1281--1292.
\bibitem[Da] {Da} J.\ L.\ Davidson, \textit{Some comments on an Iteration problem}, \textsc{Proceedings of the Sixty
Manitoba Conference on Numerical Mathematics (Univ. Manitoba, Winnipeg, Manitoba},
1976 pp.\ 155--159, \textsc{Congressus Numerantium XVII, Utilitas Math. , Winnipeg, MN}, 1977.
\bibitem[Gu] {Gu} R. K.\ Guy, \textit{Unsolved Problems in Number Theory, second edition} (1994) p.\ 219.
\bibitem[HW] {HW}   \textsc{G.H.\ Hardy and E.M.\ Wright},
                   "An Introduction to the Theory of Numbers",
                   5th ed.\, Oxford University Press, 1981.
\bibitem[HS] {HS} M.\ Hindry and J.H.\ Silverman, \textit{Diophantine Geometry, An
                   Introduction}, GTM 201, Springer Verlag, Berlin, 2000.
\bibitem[La] {La} J.\ C.\ Lagarias, \textit{The 3x + 1 problem and its generalizations}, \textsc{Amer. Math. Monthly
85} (1985).
\bibitem [Le] {Le} D.\ Levy, \textit{Injectivity and surjectivity of Collatz functions},
                    \textit{Discrete Mathematics} \textbf{285} [2004] pp.\ 191--199.
\bibitem[LMN] {LMN} \textsc{Michel Laurent, Maurice Mignotte and Yuri Nesterenko},
                   "Formes lin\'eaires en deux logarithmes et d\'eterminants d'interpolation",
                   \textit{J.\ Number Th.} \textbf{55} [1995],  pp.\ 285--321.
\bibitem[Ma] {Ma}  K\. R.\ Matthews, \textit{Generalized $3x + 1$ mappings: Markov chains and ergodic theory},
                    \textsc{The Ultimate Challenge: The 3x + 1 Problem, ( J. C. Lagarias, Ed.)},pp.\ 79--104,
                    Amer. Math. Society, Providence RI 2010.
\bibitem[PS] {PS} S.\ Porubsky and J.\ Sch\"{o}nheim, \textit{The covering systems of Paul Erd\"{o}s: Past, present
            and future}, \textsc{ Paul Erd\"{o}s and his mathematics, Vol. I, Janos Bolyai and his mathematics,
            vol. I, Janos Bolyai Math. Soc., Budapest} 2002, pp.\ 581--627.
\bibitem[Po] {Po} S.\ Porubsky, \textit{Results and problems on covering systems of residue classes}, \textsc{Mitteilungen
                 aus dem Math. Sem. Giessen, Heft 150, Univ. Giessen} 1981.
\bibitem[Rh] {Rh}   \textsc{G.\ Rhin},
                   "Approximants de Pad\'e et mesures effectives d'irrationalit\'e",
                   \textit{Progress in Mathematics} \textbf{71} [1987], pp.\ 155--164.
\bibitem[St] {St}   \textsc{R.P.\ Steiner},
                   "A Theorem on the Syracuse Problem",
                   \textit{Proc.\ 7th Manitoba Conference on Numerical Mathematics 1977},
                   Winnipeg 1978, pp.\ 553--559.
\bibitem[Ta] {Ta} \textsc{M.\ Tavares}, "A remark about the density of the orbits of the
                    Collatz permutation", \textit{Discrete Mathematics 313 (2013)}, pp.\ 2139–-2145.
\bibitem[SW] {SW} J.\ L.\ Simons and B.\ M.\ M.\ de Weger, \textit{Theoretical and computational bounds for $m$-cycles of
                   the $3n+1$ problem.}  Acta arithmetica \textbf{117.1} [2005] pp.\ 51--70, see for latest results
                   \texttt{http://www.win.tue.nl/~bdeweger/research.html}.
\bibitem[Ve] {Ve} G.\ Venturini, \textit{On a generalization of the 3x + 1 problem}, \textsc{Adv. Appl. Math. 19}
                    [1997], pp.\ 295--305.
\end{thebibliography}
\end{document}